\DeclareFontFamily{U}{mathb}{\hyphenchar\font45}
\DeclareFontShape{U}{mathb}{m}{n}{ <-6> mathb5 <6-7> mathb6 <7-8>
  mathb7 <8-9> mathb8 <9-10> mathb9 <10-12> mathb10 <12-> mathb12 }{}
\DeclareSymbolFont{mathb}{U}{mathb}{m}{n}
\DeclareMathSymbol{\prec}{\mathrel}{mathb}{"A0}
\DeclareMathSymbol{\succ}{\mathrel}{mathb}{"A1}
\DeclareMathSymbol{\preceq}{\mathrel}{mathb}{"A8}
\DeclareMathSymbol{\succeq}{\mathrel}{mathb}{"A9}
\DeclareMathSymbol{\precneq}{\mathrel}{mathb}{"AC}
\DeclareMathSymbol{\succneq}{\mathrel}{mathb}{"AD}
\definecolor{gray}{gray}{0.4}
\setlist{leftmargin=*}
\newtheorem{theorem}{Theorem}[section]
\newtheorem{lemma}[theorem]{Lemma}
\newtheorem{proposition}[theorem]{Proposition}
\newtheorem{corollary}[theorem]{Corollary}
\newtheorem{definition}[theorem]{Definition}
\newtheorem{remark}[theorem]{Remark}
\newcommand{\LM}{\mathrm{lm}}
\newcommand{\LT}{\mathrm{lt}}
\newcommand{\ZZ}{\mathbb{Z}}
\newcommand{\QQ}{\mathbb{Q}}
\def\Sig{^{[\Sigma]}}
\newcommand{\sig}{\mathrm{sig}}
\newcommand{\s}{\sig}
\newcommand{\ind}{\mathrm{ind}}
\newcommand{\divides}{\mid}
\newcommand{\LC}{\mathrm{lc}}
\newcommand{\lm}{\LM}
\newcommand{\lt}{\LT}
\newcommand{\lc}{\LC}
\newcommand{\amb}{\textup{amb}}
\newcommand{\lma}{\textup{\textsc{lm}}}
\newcommand{\siga}{\textup{\textsc{sig}}}
\newcommand{\lcmlc}{\textup{lcmlc}}
\newcommand{\lcm}{\textup{lcm}}
\newcommand{\sigComb}{\textup{sig-Comb}}
\newcommand{\Syz}{\textnormal{Syz}}
\newcommand{\SPol}{\textup{S-Pol}}
\newcommand{\GPol}{\textup{G-Pol}}
\def\<#1>{\langle#1\rangle}
\newcommand{\eg}{e.g.}
\newcommand{\ie}{i.e.\xspace}
\newcommand{\removelatexerror}{\let\@latex@error\@gobble}
\newcommand{\RX}[1][X]{R\langle #1\rangle}
\newcommand{\RXY}{R[X]\langle Y\rangle}
\begin{document}

\title{Signature Gröbner bases in free algebras over rings}

\author{Clemens Hofstadler}
\authornote{C. Hofstadler was supported by the Austrian Science Fund (FWF) grant P~32301.}
\email{clemens.hofstadler@mathematik.uni-kassel.de}
\orcid{0000-0002-3025-0604}
\affiliation{%
  \institution{Institute of Mathematics, University of Kassel}
  \streetaddress{Heinrich-Plett-Stra\ss e 40 }
  \city{Kassel}
  \country{Germany}
}

\author{Thibaut Verron}
\authornote{T. Verron was supported by the Austrian Science Fund (FWF) grant P~34872.}
\email{thibaut.verron@jku.at}
\orcid{0000-0003-0087-9097}
\affiliation{%
  \institution{Institute for Algebra, Johannes Kepler University}
  \streetaddress{Altenberger Stra\ss e 69}
  \city{Linz}
  \country{Austria}
}

\begin{abstract}
  We generalize signature Gröbner bases, previously studied in the free algebra over a field or polynomial rings over a ring, to ideals in the \emph{mixed algebra} $R[x_{1},\dots,x_{k}]\langle y_{1},\dots,y_n \rangle$ where $R$ is a principal ideal domain.
  We give an algorithm for computing them, combining elements from the theory of commutative and noncommutative (signature) Gröbner bases, and prove its correctness.
  
  Applications include extensions of the free algebra with commutative variables, \eg, for homogenization purposes or for performing ideal theoretic operations such as intersections, and computations over $\ZZ$ as universal proofs over fields of arbitrary characteristic.
  
  By extending the signature cover criterion to our setting,
  our algorithm also lifts some technical restrictions from previous noncommutative signature-based algorithms, now allowing \eg, elimination orderings.
  We provide a prototype implementation for the case when $R$ is a field, and show that our algorithm for the mixed algebra is more efficient than classical approaches using existing algorithms.
\end{abstract}

\keywords{Signature Gröbner bases, noncommutative polynomials, mixed commutative variables, coefficients in rings, principal ideal domains}

\begin{CCSXML}
<ccs2012>
<concept>
<concept_id>10010147.10010148.10010149.10010150</concept_id>
<concept_desc>Computing methodologies~Algebraic algorithms</concept_desc>
<concept_significance>500</concept_significance>
</concept>
</ccs2012>
\end{CCSXML}

\ccsdesc[500]{Computing methodologies~Algebraic algorithms}

\maketitle


\section{Introduction}
\label{sec:introduction}

Gröbner bases are an essential tool in computational algebra.
They are best known in algebras of commutative polynomials over fields~\cite{buchberger-1965}, but
have also been extended to various settings, including polynomials over rings~\cite{Moller:1988:grobnerrings2,Kandri-Rody-Kapur,Pan:Dbases,Lichtblau}, modules~\cite{Moller:1986:generalizationtomodules}, skew polynomial rings~\cite{Galligo-1985-some-algorithmic-questions}, and noncommutative polynomials in the free algebra over fields~\cite{Mor85} or over rings~\cite{Pritchard, mikhalev, Spic, NG4, Cameroun, labOrE, LMAZ20, SPES4}.
In this last setting, Gröbner bases have proven to be a key tool in reducing and proving operator identities~\cite{HW94,HRR19,CHRR20,SL20,raab2021formal}.

In parallel to those generalizations, the last 20 years have seen the development of a new paradigm for computing Gröbner bases, with the concept of signatures~\cite{Faugere-2002-F5,eder:2017:survey,Lairez-2022-AxiomsForTheory}.
This approach, which improves upon the idea of tracing syzygies presented in the earlier work~\cite{MMT}, has had a significant impact on the field, leading to advances also in other algorithmic tools such as staggered linear bases~\cite{HJ,Hashemi}, initially introduced in~\cite{GM2}. 

Beyond their original purpose of optimizing the algorithms, it was more recently observed that the data of signatures also has direct applications.
For instance, they have been used in computational geometry~\cite{eder2022signature}, and they allow to perform a number of operations on the syzygy module of a family of polynomials, without the additional cost of module Gröbner bases computations~\cite{GVW}.
In particular, the data of a Gröbner basis with its signatures makes it possible to easily get a \emph{proof} that an element lies in an ideal, by reconstructing a representation in terms of the original generators~\cite{hofstadler-verron-shortrep}.
This potential has led to signature Gröbner bases being generalized beyond commutative polynomials over fields, for instance to polynomials over rings~\cite{eder-pfister-popescu,francis-verron}, to solvable noncommutative algebras~\cite{Sun-2012-signature-groebner-solvable}, and recently to the free algebra~\cite{hofstadler-verron}.

In this paper, we consider two generalizations of signature Gröbner bases in the free algebra at once.
First, we consider the case of noncommutative polynomials with some commutative variables, namely elements in $R[X]\<Y> = R[x_1,\dots,x_{k}]\langle y_1,\dots,y_n\rangle$.
Such objects arise, for example, when introducing auxiliary variables such as homogenization variables or parameters (\eg, when computing the intersection of ideals or the homogeneous part of an ideal).
The naive approach, consisting of adding the commutator relations to the input polynomials, is inefficient with signatures, because the information of these relations is not propagated to the signatures.

In addition, we relax the conditions on the base ring, no longer requiring it to be a field, but only a principal ideal domain (PID).
This, in particular, allows computations over $\ZZ$, which can be considered universal as they remain valid in arbitrary characteristic.
Furthermore, $R[X]\<Y>$ provides a natural setting for studying many finitely presented structures, such as Iwahori-Hecke algebras~\cite{hump90,LMAZ20} or (discrete) Heisenberg groups~\cite{lind2015survey}.

In the noncommutative case, a difficulty is that most ideals do not admit a finite Gröbner basis, and even among those that do, most do not admit a finite Gröbner basis of the module of syzygies.
In~\cite{hofstadler-verron}, the authors proposed an algorithm taking advantage of the data encoded within signatures, and of the structure of the syzygy module, to define and compute signature Gröbner bases which may be finite.
This is the combined effect of two properties of noncommutative polynomials: the definition of S-polynomials~\cite{ber78} implicitly includes Buchberger's coprime criterion, which ensures that one can only form finitely many S-polynomials with any given pair of polynomials; and the signature F5 criterion makes it possible to identify a large number of structural syzygies.

In the case without signatures, when attempting to generalize Gröbner bases, it is frequent that, for structural reasons, Buchberger's criterion can no longer eliminate all but finitely many S\nobreakdash-polynomials.
This was already observed for classical noncommutative Gröbner bases over rings~\cite{mikhalev, LMAZ20}, and also carries over to the generalizations at hand: 
Buchberger's criterion requires also the coefficients to be coprime, and the subsequently weakened criterion cannot ensure that only finitely many
S-polynomials remain.
In addition, when working with Gröbner bases over PIDs, one needs to compute so-called G-polynomials for each pair of polynomials, and there will typically be infinitely many such G-polynomials.

In this paper, we describe an algorithm for computing signature Gröbner bases for polynomials in the mixed algebra $R[X]\langle Y\rangle$.
The algorithms and data structures combine elements from~\cite{mikhalev} for the combinations and reductions, \cite{hofstadler-verron} for managing signatures in the free algebra, and \cite{francis-verron} for managing signatures with coefficients in a PID.
With the limitations exposed above, one cannot hope that the algorithm terminates, but we show that it correctly enumerates a signature Gröbner basis, as well as a basis of the module of syzygies.
The proof combines elements from the theory of signatures over the free algebra and of signatures over PIDs in the commutative case.

A feature of the algorithm is that it relies on the signature cover criterion~\cite{GVW,francis-verron} 
instead of requiring that every element reduces to 0.
As in the commutative case, this allows to decouple the selection strategy used for selecting S- and G-polynomials from the order on the signatures.
As a consequence, the algorithm does not have to process S- and G-polynomials by increasing signature.
This is particularly important in the noncommutative case, where all algorithms require that the selection strategy is so-called fair.
This is a strong requirement excluding for instance elimination orderings. 
As such, the new algorithm with the selection strategy decoupled from the signature order is the first algorithm that allows to enumerate a signature Gröbner basis in the free algebra for any monomial order.

\section{The mixed algebra}
\label{sec:definitions}

Let $R$ be a commutative principal ideal domain (PID) with $1$.
We assume that $R$ is computable, in the sense that all arithmetic operations, including gcd-computations and the computation of Bézout coefficients, can be performed effectively.
Classical examples of such rings are the integers $\ZZ$ or the univariate polynomial ring $K[x]$ over a field $K$, with the extended Euclidean algorithm.

We denote by $A=\RXY$ the \emph{mixed algebra} of polynomials in commutative (or central) variables $x_{1},\dots,x_{k}$ and noncommutative variables $y_{1},\dots,y_{n}$.
Formally, this algebra is the quotient $\RX[X,Y]/(x_{i}y_{j}-y_{j}x_{i}, x_{i}x_{j}- x_{j}x_{i} \mid\forall i,j)$.
Note that, in this algebra, $x_{i}f = fx_{i}$ for all $f \in \RXY$, but in general $y_i f \neq f y_i$.

A (mixed) monomial in $A$ is a product $vw$ with $v = x_{1}^{\alpha_{1}}\cdots x_{k}^{\alpha_{k}}$ a (commutative) monomial in $X$ and $w = y_{i_{1}} y_{i_{2}} \cdots y_{i_{l}}$ a (noncommutative) word in $Y$.
We denote by $[X]$ the set of all commutative monomials in $X$ and by $\langle Y \rangle$ the set of all noncommutative words in $Y$.
A term in $A$ is the product of a nonzero coefficient in $R$ and a mixed monomial in $A$.
We denote by $M(A)$ the set of all mixed monomials of $A$, and by $T(A)$ the set of all terms of $A$.

Divisibility of mixed monomials and terms in $A$ is defined componentwise: given $c,d \in R$, $u,v \in [X]$ and $a,b \in \langle Y\rangle$, $cua$ divides $dvb$ if and only if $c$ divides $d$, $u$ divides $v$, and $a$ is a subword of $b$.

A monomial ordering $\leq$ is a total ordering on $M(A)$ which is compatible with multiplication, that is, $m \leq m'$ implies $amb \leq am'b$ for all $a,b,m, m' \in M(A)$,
and is such that every non-empty set has a minimal element (well-ordering).
A monomial ordering naturally defines a partial ordering on $T(A)$, called a term ordering.
For ease of notations, we extend the definition of monomial and term to contain $0$, which is assumed to be smaller than all other elements.

Given a family $(f_{1},\dots,f_{r}) \in A^r$ of polynomials in $A$, we let $I = (f_1,\dots,f_r)$ be the (two-sided) ideal generated by $f_1,\dots,f_r$.
Furthermore, we consider the free $A$-bimodule $\Sigma = (A \otimes_{R[X]} A)^{r}$, see also~\cite[Sec.~0.11]{cohn-free-rings-and-their-relations}.
We denote its canonical basis by $\varepsilon_{1},\dots,\varepsilon_{r}$, and we equip it with an $A$-bimodule homomorphism
$\overline{\cdot} : \varepsilon_{i} \mapsto f_{i}$.
A \emph{labeled polynomial} is a pair $(f,\alpha) \in I \times \Sigma$ with $f = \overline{\alpha}$, denoted $f^{[\alpha]}$.
The \emph{labeled module} generated by $f_1,\dots,f_r$ is the set of all labeled polynomials and denoted by $I\Sig$.
It is isomorphic to $\Sigma$ as an $A$-bimodule.

An element $\alpha \in \Sigma$ with $\overline \alpha = 0$ is called a \emph{syzygy of $I\Sig$}.
The set of all syzygies of $I\Sig$, denoted by $\Syz(I\Sig)$, forms an $A$\nobreakdash-bimodule. 

A (module) monomial in $\Sigma$ is a product $ua\varepsilon_{i}b$ where $u \in [X]$ and $a,b \in \<Y>$.
A term in $\Sigma$ is the product of a nonzero coefficient and a monomial.
The set of all monomials (resp.~terms) in $\Sigma$ is denoted by $M(\Sigma)$ (resp.~$T(\Sigma)$).
Every $\alpha \in \Sigma$ has a unique representation $\alpha = \sum_{i=1}^d c_i u_i a_i \varepsilon_{j_i} b_i$ with nonzero $c_i \in R$ and pairwise different $u_i a_i \varepsilon_{j_i} b_i \in M(\Sigma)$.

A module ordering is a total ordering on $M(\Sigma)$ which is compatible with multiplication and a well-ordering.
A module ordering is called \emph{fair} if for any monomial $\mu$, the set of all monomials that are smaller than $\mu$ is finite.
Given a monomial ordering $\leq$ on $A$ and a module ordering $\preceq$ on $\Sigma$, the orders are said to be compatible if for all $u,v \in [X]$, $a,b \in \langle Y \rangle$, 
$i \in \{1,\dots,r\}$ we have $ua < vb$ iff $ua \varepsilon_{i} \prec vb \varepsilon_{i}$ iff $u \varepsilon_{i} a \prec v \varepsilon_{i} b$.


As for polynomials, we extend module orderings to partial orderings on terms.
More precisely, for $\mu,\sigma \in M(\Sigma)$ and $c,d \neq 0 \in R$, we write 
$c\mu \simeq d\sigma$ if $\mu = \sigma$, and $c \mu \preceq d \sigma$ if $\mu \prec \sigma$ or $c\mu \simeq d \sigma$.

Given a monomial ordering, the leading term $\lt(f)$ of $f\in A$ is the largest term appearing in the support of $f$.
The leading monomial $\lm(f)$ and the leading coefficient $\lc(f)$ are the corresponding monomial and coefficient, respectively.
Given a module ordering, the signature $\sig(\alpha)$ of a module element $\alpha$ is the largest term appearing in the support of $\alpha$.
Note that signatures include coefficients and that they are compatible with scalar multiplication, i.e., $\sig(t \alpha t') = t \sig(\alpha) t'$ for $t,t' \in T(A)$.

%

\section{SIGNATURE GR\"OBNER BASES}
\label{sec:signature-basis}

We fix $(f_{1},\dots,f_{r}) \in A^r$ and let $I\Sig$ be the labeled module generated by $f_1,\dots,f_r$.
As in other settings, signature Gr\"obner bases in $A$ are characterised by the fact that all elements in $I\Sig$ are reducible in a way that does not increase the signature.
Such reductions are called \emph{$\s$-reductions} and are defined below.
In the setting of coefficient rings several notions of reductions exist (weak, strong, and also modular reductions by the coefficients).
In this work, we focus on strong reductions requiring divisibility of the leading coefficients.

\begin{definition}\label{def:reduction}
Let $f^{[\alpha]}, g^{[\beta]} \in I^{[\Sigma]}$.
A combination $h^{[\gamma]} = f^{[\alpha]} - t g^{[\beta]} b$ with $t \in T(A)$, $b \in \<Y>$ is a \emph{(top) $\sig$\nobreakdash-reduction} if 
$\lt(t g b) = \lt(f) > \lt(h)$ and $\sig(t \beta b) \preceq \sig(\alpha)$.
The $\sig$-reduction is \emph{regular} if $\sig(t \beta b) \prec \sig(\alpha)$, and \emph{singular}
if $\sig(t \beta b) = \sig(\alpha)$.
If such a combination exists, $f^{[\alpha]}$ is called \emph{(regular/singular) $\sig$-reducible} by $g^{[\beta]}$.
\end{definition}

As usual, an element $f^{[\alpha]}$ is (regular/singular) $\sig$\nobreakdash-reducible by a set $G\Sig \subseteq I\Sig$ if there exists 
$g^{[\beta]} \in G\Sig$ such that $f^{[\alpha]}$ is (regular/singular) $\sig$-reducible by $g^{[\beta]}$.
If the result of a $\sig$\nobreakdash-reduction of $f^{[\alpha]}$ is $0^{[\gamma]}$, then we say that $f^{[\alpha]}$ \emph{$\sig$-reduces to $0$}.
We extend these definitions also to sequences of $\sig$-reductions.

Note that $\sig$-reductions do not increase the signature, that is, $\sig(\gamma) \preceq \sig(\alpha)$, with equality for regular $\sig$-reductions and strict inequality 
for singular $\sig$-reductions.
There are also $\sig$-reductions which are neither regular nor singular; in this case $\sig(\gamma) \simeq \sig(\alpha)$.

We extend the definition of (strong) signature Gr\"obner bases from the commutative case~\cite[Def.~2.5]{francis-verron} in a straightforward way.

\begin{definition}\label{def:sig-GB}
A set $G\Sig \subseteq I\Sig$ is a \emph{(strong) signature Gr\"obner basis of $I\Sig$ (up to signature $\sigma \in T(\Sigma)$)} 
if all $f \in I^{[\Sigma]}$ with $f \neq 0$ (and $\sig(\alpha) \prec \sigma$) are $\sig$-reducible by $G\Sig$.
\end{definition}

By disregarding the module labelling from Definition~\ref{def:sig-GB}, one recovers the definition of classical Gr\"obner bases in $A$~\cite{mikhalev}.
The polynomial parts of a signature Gr\"obner basis of $I\Sig$ form a Gr\"obner basis of $I = (f_1,\dots,f_r)$.

Signature Gr\"obner bases were introduced with the goal of using signatures of (known) syzygies to predict reductions to zero,
and thereby, speed up Gr\"obner basis computations.
Later, it was observed that signature-based algorithms also allow to compute, as a byproduct, a Gr\"obner basis of the syzygy module as defined below.

\begin{definition}
An element $\alpha \in \Sigma$ is \emph{(top) reducible} by a set $H \subseteq \Sigma$ if there exist
$\gamma \in H$, $t \in T(A)$, $b \in \<Y>$ such that $\sig(\alpha) = \sig(t \gamma b)$.

A set of syzygies $H \subseteq \Syz(I\Sig)$ is a \emph{syzygy basis of $I\Sig$ (up to signature $\sigma \in T(\Sigma)$)}
if every nonzero syzygy $\alpha \in \Syz(I\Sig)$ (with $\sig(\alpha) \prec \sigma$) is reducible by $H$.
\end{definition}

\section{Ambiguities}
\label{sec:ambiguities}

In order to define S- (and~G\nobreakdash-)polynomials in our setting, we first adapt the notion of \emph{ambiguities} from~\cite{ber78} to our setting.
Ambiguities characterise situations where one term can be reduced in two different ways,
and the aim of a (signature) Gröbner basis computation is to resolve ambiguities by forming S- (and~G\nobreakdash-)polynomials.
We first recall the definition of ambiguities for noncommutative words, and then subsequently extend them to mixed monomials and to labeled polynomials.

\begin{definition}
Let $p,q \in \<Y>$.
If there exist $a,b \in \<Y> \setminus \{1\}$ with $|a| < |q|$ and $|b| < |p|$ such that $ap = qb$, resp.~$pa = bq$, then we call the tuple
$(a \otimes 1,1 \otimes b, p, q)$, resp.~ $(1 \otimes a, b\otimes 1, p,q)$, an \emph{overlap ambiguity} of $p$ and $q$.

If there exist $a,b \in \<Y>$ such that $p = aqb$, resp.~$apb = q$, then we call the tuple
$(1 \otimes 1,a \otimes b, p, q)$, resp.~$(a \otimes b, 1 \otimes 1, p, q)$, an \emph{inclusion ambiguity} of $p$ and $q$.

Finally, for every $m \in \<Y>$, we call the tuples
$(1 \otimes mq, pm \otimes 1, p,q)$ and $(qm \otimes 1, 1 \otimes mp, p,q)$ \emph{external ambiguities} of $p$ and $q$.
\end{definition}

\begin{remark}
Note that two words $p,q$ can only have finitely many overlap and inclusion ambiguities, but they always have infinitely many external ambiguities.
\end{remark}

\begin{definition}
Let $up$, $vq \in M(A)$ and $(a \otimes b, c \otimes d,p,q)$ be an ambiguity of $p$ and $q$.
An \emph{ambiguity} of $up$ and $vq$ is given by 
	$(u' a \otimes b, v' c \otimes d, up, vq)$,
where $u' = \lcm(u, v) / u$ and $v' = \lcm(u,v) / v$.

For $f,g \in A \setminus \{0\}$, an \emph{ambiguity} of $f$ and $g$ is $(a\otimes b, c\otimes d, f, g)$ where $(a \otimes b, c \otimes d, \lm(f), \lm(g))$ is an ambiguity of $\lm(f)$ and $\lm(g)$.
We denote by $\amb(f,g)$ the set of all ambiguities of $f$ and $g$.
When clear by the context, we shall simply write $(a \otimes b, c \otimes d) \in \amb(f,g)$.

We define analogously ambiguities of labeled polynomials by considering their polynomial parts.
\end{definition}

Note that if $(m_1\otimes n_1, m_2 \otimes n_2, f,g)$ is an ambiguity of $f$ and $g$, then $\lm(m_1 f n_1) = \lm(m_2 g n_2)$.


Before defining S- and G-polynomials with module labelling, we introduce some useful terminology for ambiguities of labeled polynomials.
In the following, for a pair of nonzero $f,g \in A$, let $\lcmlc(f,g)$ be the least common multiple of $\lc(f)$ and $\lc(g)$.

\begin{definition}
Let $f^{[\alpha]},g^{[\beta]} \in I\Sig$ be such that $f,g \neq 0$ and let $a = (m_1\otimes n_1, m_2 \otimes n_2, f^{[\alpha]},g^{[\beta]}) \in \amb(f^{[\alpha]},g^{[\beta]})$.
The \emph{leading monomial} of $a$ is
	$\lma(a) \coloneqq \lm(m_1 f n_1) = \lm(m_2 g n_2)$
and, with $c_f = \frac{\lcmlc(f,g)}{\lc(f)}$, $c_g = \frac{\lcmlc(f,g)}{\lc(g)}$, the \emph{signature} of $a$ is
\[
	\siga(a) \coloneqq \max\left(\sig(c_f m_1 \alpha n_1), - \sig(c_g m_2 \beta n_2)\right),
\]
choosing the first in case of tie.
The ambiguity $a$ is called \emph{regular} if $\sig(m_1 \alpha n_1) \not\simeq \sig(m_2 \beta n_2)$ and \emph{singular} if $\sig(c_f m_1 \alpha n_1) = \sig(c_g m_2 \beta n_2)$.
\end{definition}

\begin{remark}
Recall that the ordering on the signatures is only partial, and thus, it can happen that an ambiguity $a$ is neither regular nor singular.
\end{remark}

The following lemma asserts that any situation where multiples of two labeled polynomials share a common leading monomial but differ in their signatures,
can be characterised by a regular ambiguity of the two elements.

\begin{lemma}\label{lemma:ambiguities}
Let $g_1^{[\beta_1]}, g_2^{[\beta_2]}\in I\Sig$ be such that $g_1,g_2 \neq 0$ and let $t_i \in T(A), b_i \in \<Y>$, $i = 1,2$, such that
\[
	\lm(t_1 g_1 b_1) = \lm(t_2 g_2 b_2) \; \text{ and } \; \sig(t_1 \beta_1 b_1) \succ \sig(t_2 \beta_2 b_2).
\]
Then there exists a regular ambiguity $a \in \amb(g_1^{[\beta_1]}, g_2^{[\beta_2]})$, $t_3 \in T(A), b_3 \in \<Y>$ such that
	$t_3 \lma(a) b_3 = \lm(t_i g_i b_i)$ and $t_3 \siga(a) b_3 \simeq \sig(t_1 \beta_1 b_1)$,
with equality of signatures if $\lc(t_1 g_1 b_1) = \lc(t_2 g_2 b_2)$.
\end{lemma}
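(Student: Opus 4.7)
The plan is to reduce the equality of leading monomials to a combinatorial ambiguity of the leading words of $g_1$ and $g_2$, exploiting that the $X$-variables are central. I would first write $t_i = c_i u_i a_i$ with $c_i \in R$, $u_i \in [X]$, $a_i \in \<Y>$ and $\lm(g_i) = w_i q_i$ with $w_i \in [X]$, $q_i \in \<Y>$. The hypothesis $\lm(t_1 g_1 b_1) = \lm(t_2 g_2 b_2)$ then splits cleanly into the commutative equality $u_1 w_1 = u_2 w_2$ in $[X]$ and the noncommutative equality $a_1 q_1 b_1 = a_2 q_2 b_2$ in $\<Y>$.

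On the commutative side, set $\ell = \lcm(w_1, w_2)$, $w_i' = \ell/w_i$, and $s = u_i w_i/\ell$; this is well defined since $\ell$ divides each $u_i w_i$, and yields $u_i = s w_i'$. On the noncommutative side, the relative positions of the factors $q_1, q_2$ inside the common word $a_1 q_1 b_1$ fall into one of four combinatorial patterns (disjoint, proper overlap on either side, or one included in the other), each matching one of the types of word ambiguities from the preceding definition. This produces an ambiguity $(m_1 \otimes n_1, m_2 \otimes n_2, q_1, q_2)$ of $q_1$ and $q_2$ together with a common outer context $a, b \in \<Y>$ satisfying $a_i = a m_i$ and $b_i = n_i b$.

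Combining both parts, the tuple $a \coloneqq (w_1' m_1 \otimes n_1, w_2' m_2 \otimes n_2, g_1^{[\beta_1]}, g_2^{[\beta_2]})$ is an ambiguity of $g_1^{[\beta_1]}$ and $g_2^{[\beta_2]}$; I would set $t_3 \coloneqq s a$ and $b_3 \coloneqq b$. Then $t_3 \lma(a) b_3 = \lm(t_i g_i b_i)$ is immediate from the decomposition and centrality of $X$. For the signatures, bimodule compatibility gives $\sig(t_i \beta_i b_i) = c_i\, sa\, \sig(w_i' m_i \beta_i n_i)\, b$ for $i = 1,2$, so the hypothesis $\sig(t_1 \beta_1 b_1) \succ \sig(t_2 \beta_2 b_2)$ transfers to $\sig(w_1' m_1 \beta_1 n_1) \succ \sig(w_2' m_2 \beta_2 n_2)$. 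This implies both that $a$ is regular and that $\siga(a) = c_{g_1} \sig(w_1' m_1 \beta_1 n_1)$, with $c_{g_1} = \lcmlc(g_1,g_2)/\lc(g_1)$, whence $t_3 \siga(a) b_3$ and $\sig(t_1 \beta_1 b_1)$ share the same underlying module monomial, giving the $\simeq$ relation.

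The combinatorial case analysis on the noncommutative side is the main technical step; the rest is essentially bookkeeping along the bimodule action. For the final clause, observe that $\lc(t_i g_i b_i) = c_i \lc(g_i)$, so the assumption that both leading coefficients agree forces $c_1 \lc(g_1) = c_2 \lc(g_2)$, a common value which, up to the chosen representative of $\lcmlc(g_1,g_2)$, equals $\lcmlc(g_1,g_2)$, giving $c_1 = c_{g_1}$ and upgrading $\simeq$ to equality of terms.
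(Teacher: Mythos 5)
Your proof follows essentially the same route as the paper's: decompose $t_i$ and $\lm(g_i)$ into commutative and noncommutative parts, exploit centrality to split the hypothesis into an equality in $[X]$ and one in $\langle Y\rangle$, locate a word-level ambiguity inside the common word $a_1 q_1 b_1 = a_2 q_2 b_2$ via a case analysis on the relative positions of $q_1$ and $q_2$, lift it by dividing out the common commutative factor, and then set $t_3 = sa$, $b_3 = b$. The verification of the leading-monomial condition, the transfer of $\succ$, regularity, and the $\simeq$ relation all match the paper's argument. (One cosmetic point: you overload the symbol $a$ to name both the outer left word and the resulting ambiguity, which obscures the line ``$t_3 \coloneqq sa$''.)

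The final clause, however, contains a genuine gap. You assert that the common value $c_1\lc(g_1) = c_2\lc(g_2)$ equals $\lcmlc(g_1,g_2)$ up to choice of representative, and conclude $c_1 = c_{g_1}$. That is false: $c_1\lc(g_1)$ is merely \emph{a} common multiple of $\lc(g_1)$ and $\lc(g_2)$, not necessarily the least one. For instance over $R = \ZZ$ with $\lc(g_1)=2$, $\lc(g_2)=3$, $c_1=6$, $c_2=4$, one has $c_1\lc(g_1)=12$ while $\lcmlc(g_1,g_2)=6$, so $c_1 = 6 \neq 3 = c_{g_1}$, and no unit rescaling of the lcm reconciles them. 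The correct fix, which is what the paper does, is not to change $c_1$ or $c_{g_1}$ but to scale $t_3$ by the factor $c_1\lc(g_1)/\lcmlc(g_1,g_2)$, which is a well-defined element of $R$ precisely because $c_1\lc(g_1)$ is a common multiple of $\lc(g_1)$ and $\lc(g_2)$. With that rescaled $t_3$ one gets $t_3\,\siga(a)\,b_3 = c_1\, sa\, \sig(w_1' m_1 \beta_1 n_1)\, b = \sig(t_1\beta_1 b_1)$ exactly. The rest of your argument is sound.
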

\begin{proof}
  Write $t_i = c_i u_i a_i$ and $\lm(g_i) = v_i w_i$ with $c_i \in R, u_i,v_i \in [X], a_i,w_i \in \<Y>$.
  Then, by assumption $u_1 v_1 = u_2 v_2$ and $a_1 w_1 b_1 = a_2 w_2 b_2 =: W$.
  If, in $W$, either of $w_1$ and $w_2$ is completely contained in the other, then there exists an inclusion ambiguity of $w_1$ and $w_2$ characterising this situation, 
  otherwise one of them starts earlier in $W$ and the other finishes later, in which case there is an overlap or external ambiguity of $w_1$ and $w_2$ characterising this situation.
  Hence, in any case, there exists an ambiguity $(p_1 \otimes q_1, p_{2} \otimes q_2)$ of $w_1$ and $w_2$ such that $p_1 w_1 q_1 = p_2 w_2 q_2$ is a subword of $W$,
  i.e., there exist $l,r \in \<Y>$ such that $l p_1 w_1 q_1 r = l p_2 w_2 q_2 r = W$.
  By definition $a = (v_1' p_1 \otimes q_1, v_2' p_2 \otimes q_2)$, with $v_i' = \lcm(v_1, v_2) / v_i$, is an ambiguity in $\amb(g_1^{[\beta_1]}, g_2^{[\beta_2]})$.
  We claim that $a$ satisfies the conditions of the lemma with $t_3 = m_3 l \in T(A)$, where $m_3 = u_1 v_1 / \lcm(v_1,v_2) =  u_2 v_2 / \lcm(v_1,v_2)$, and $b_3 = r$.
  The condition on the leading monomials is clear by construction. 
  For the claim concerning the signatures, we note that $m_3 v_i' = u_i$ and by choice of $l,r$ we have $l p_i = a_i$ and $q_i r = b_i$.
  Thus, for $i = 1,2$,
  \begin{align*}
      \sig(t_3 v_i' p_i \beta_i q_i b_3) &= \sig(m_3 v_i' l p_i \beta_i q_i r) \\
      &= \sig(u_i a_i \beta_i b_i) \simeq \sig(t_i \beta_i b_i),
 \end{align*}
 showing that $a$ is regular since $\sig(t_1 \beta_1 b_1) \succ \sig(t_2 \beta_2 b_2)$ and that 
 $t_3 \siga(a) b_3 = \sig(t_3 v_1' p_1 \beta_1 q_1 b_3) \simeq \sig(t_1 \beta_1 b_1)$.
  
  For the final part, assume that also $\lc(t_1 g_1 b_1) = \lc(t_2 g_2 b_2)$, i.e., $c_1 \lc(g_1) = c_2 \lc(g_2)$.
  Then, multiplying $t_3$ by $c_1 \lc(g_1) / \lcmlc(g_1,g_2)$ yields the claimed equality of signatures.
 \end{proof}

We extend the definition of S-polynomials to our setting.

\begin{definition}
Let $f^{[\alpha]},g^{[\beta]} \in I\Sig$ be such that $f,g \neq 0$ and let $a = (m_1\otimes n_1, m_2 \otimes n_2) \in \amb(f^{[\alpha]},g^{[\beta]})$.
The \emph{S-polynomial} of $a$ is
\[
	\SPol(a) \coloneqq \frac{\lcmlc(f,g)}{\lc(f)} m_1 f^{[\alpha]} n_1 - \frac{\lcmlc(f,g)}{\lc(g)} m_2 g^{[\beta]} n_2.
\]
\end{definition}

As usual, S-polynomials are defined so that leading terms cancel, i.e., if $h^{[\gamma]} = \SPol(a)$, then $\lm(h) \prec \lma(a)$.
Furthermore, $\sig(\gamma) \preceq \siga(a)$, with equality if and only if the ambiguity is regular
and strict inequality if and only if the ambiguity is singular.

In the setting of coefficient rings, we also need G-polynomials,
which do not aim at canceling leading terms but at obtaining minimal leading coefficients.

\begin{definition}
Let $f^{[\alpha]},g^{[\beta]} \in I\Sig$ be such that $f,g \neq 0$ and let $a = (m_1\otimes n_1, m_2 \otimes n_2) \in \amb(f^{[\alpha]},g^{[\beta]})$.
Furthermore, let $c,d \in R$ be Bézout coefficients of $\lc(f)$ and $\lc(g)$, \ie, $c \lc(f) + d \lc(g) = \gcd(\lc(f),\lc(g))$.
The \emph{G-polynomial} of $a$ w.r.t.\ $c,d$ is
\[
	\GPol_{c,d}(a) \coloneqq c m_1 f^{[\alpha]} n_1 + d m_2 g^{[\beta]} n_2.
\]
\end{definition}

While the coefficients $c,d$ in the definition of G-polynomials are not unique, the leading term is.
More precisely, the leading monomial of $\GPol_{c,d}(a)$ is $\lma(a)$ and the leading coefficient is $\gcd(\lc(f),\lc(g))$.
The signature of the G\nobreakdash-polynomial, however, depends on the choice of $c,d$.
A crucial observation \cite[Prop.~2.14]{francis-verron} is that these coefficients can be chosen so that G-polynomials are \emph{never} singular, i.e., 
so that the signatures of the two summands do not cancel each other.


\begin{lemma}\label{lemma:g-pol-non-singular}
Let $f^{[\alpha]},g^{[\beta]} \in I\Sig$ and let $a \in \amb(f^{[\alpha]},g^{[\beta]})$.
There exist $c,d \in R$ such that $\sig(\GPol_{c,d}(a)) \simeq \siga(a)$.
\end{lemma}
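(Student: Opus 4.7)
The plan is to adapt the argument of Proposition~2.14 in~\cite{francis-verron} from the commutative setting; all the key computations take place at the level of the coefficient ring $R$, so the noncommutative and mixed-variable structures enter only through spectator monomials. Set $\sigma_1 \coloneqq \sig(m_1 \alpha n_1)$ and $\sigma_2 \coloneqq \sig(m_2 \beta n_2)$, so that $\siga(a)$ is $\max(c_f \sigma_1, c_g \sigma_2)$ with the stated tie-breaking.

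First distinguish cases by whether $\sigma_1 \simeq \sigma_2$. If not (in particular, if either is zero), then $c_f \sigma_1$ and $c_g \sigma_2$ have distinct monomials, and for any Bézout pair $(c,d)$ the two summands of $\GPol_{c,d}(a)$ cannot cancel, giving $\sig(\GPol_{c,d}(a)) = \max(c \sigma_1, d \sigma_2) \simeq \siga(a)$. Otherwise, write $\sigma_1 = e_1 \mu$ and $\sigma_2 = e_2 \mu$ with $e_1, e_2 \in R \setminus \{0\}$. The condition $\sig(\GPol_{c,d}(a)) \simeq \siga(a)$ then reduces to $c e_1 + d e_2 \neq 0$, since in that case the combined signature has monomial exactly $\mu$.

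The set of Bézout pairs forms the affine line $\{(c_0 + k c_f, d_0 - k c_g) : k \in R\}$, using $c_f \lc(f) = c_g \lc(g) = \lcmlc(f,g)$. Along this line, $c e_1 + d e_2 = (c_0 e_1 + d_0 e_2) + k(c_f e_1 - c_g e_2)$. If the ambiguity is not singular, then $c_f e_1 \neq c_g e_2$, and varying $k$ produces a Bézout pair with the desired property. If it is singular, then $c_f e_1 = c_g e_2$, equivalently $\lc(g) e_1 = \lc(f) e_2$; substituting into the Bézout identity yields
\[
(c_0 e_1 + d_0 e_2) \lc(f) = c_0 e_1 \lc(f) + d_0 \lc(g) e_1 = e_1 \gcd(\lc(f), \lc(g)),
\]
which is nonzero, so after cancelling $\lc(f)$ in the domain $R$ we conclude that the base pair $(c_0, d_0)$ already works.

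The main subtlety is the singular subcase, where the coincidence $c_f \sigma_1 = c_g \sigma_2$ might at first suggest that every Bézout combination cancels; the resolution is that the Bézout identity itself forces $c_0 e_1 + d_0 e_2$ to equal the nonzero element $e_1/c_g$, so in fact no Bézout pair is problematic in that subcase.
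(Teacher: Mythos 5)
Your proof follows the same strategy the paper relies on, namely unfolding the argument of~\cite[Prop.~2.14]{francis-verron}; the paper's own proof is just the statement that that argument carries over verbatim, so there is no genuine divergence in method. Your case split (distinct signature monomials vs.\ coinciding monomials, further split into singular and non-singular) and the Bézout-line parametrization $(c_0+kc_f,\, d_0-kc_g)$ match the intended argument, and the computation in the singular subcase --- multiplying through by $\lc(f)$ and using $e_1\lc(g)=e_2\lc(f)$ to reveal $e_1\gcd(\lc(f),\lc(g))\neq 0$ --- is exactly the right move.

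One slip in the first case needs fixing: you claim that when $\sigma_1\not\simeq\sigma_2$ the conclusion $\sig(\GPol_{c,d}(a))\simeq\siga(a)$ holds for \emph{any} Bézout pair. It does not. If, say, $\sigma_1\succ\sigma_2$ (as monomials) and the chosen Bézout pair has $c=0$ (which happens, e.g., when $\lc(g)\mid\lc(f)$ and one takes $(c,d)=(0,1)$), then $\max(c\sigma_1,d\sigma_2)=d\sigma_2\simeq\sigma_2$, whereas $\siga(a)\simeq\sigma_1$. The statement only asserts existence, so the fix is immediate: along the affine line of Bézout pairs the coordinate $c=c_0+kc_f$ takes all values in $c_0+Rc_f$ with $c_f\neq 0$, so there is a pair with $c\neq 0$ (and symmetrically with $d\neq 0$); choose the one whose nonzero coordinate sits in front of the dominant signature monomial. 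With that correction the argument is complete, and your closing observation --- that in the singular subcase \emph{every} Bézout pair works --- is a nice refinement worth keeping.
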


\begin{proof}
The proof of~\cite[Prop.~2.14]{francis-verron} only relies on properties of the leading coefficients and carries over directly to our setting.
\end{proof} 

We only consider Bézout coefficients as in Lemma~\ref{lemma:g-pol-non-singular}, and refer to the corresponding G-polynomial
as \emph{the} G-polynomial of $a$, denoted by $\GPol(a)$.
Note that if $h^{[\gamma]} = \GPol(a)$, then $\sig(\gamma) \simeq \siga(a)$ and $\lm(h) = \lma(a)$.

The following lemma captures the significance of G-polynomials for our computations.
It states that if a leading term can be written as a sum of two other leading terms, then it is divisible by the leading term of a G-polynomial.

\begin{lemma}\label{lemma:g-pol}
Let $f^{[\alpha]}, g_1^{[\beta_1]}, g_2^{[\beta_2]} \in I\Sig$ be such that $f,g_1,g_2 \neq 0$ and such that there exist $t_i \in T(A), b_i \in \<Y>$, $i = 1,2$, with
\[
	\lt(f) = \lt(t_1 g_1 b_1) + \lt(t_2 g_2 b_2) \; \text{ and } \; \sig(t_1 \beta_1 b_1) \succ \sig(t_2 \beta_2 b_2).
\]
Then there exist $g_3^{[\beta_3]} = \GPol(a)$ for some $a \in \amb(g_1^{[\beta_1]}, g_2^{[\beta_2]})$ and $t_3 \in T(A), b_3 \in \<Y>$ such that
	$\lt(t_3 g_3 b_3) = \lt(f)$ and $\sig(t_3 \beta_3 b_3) \simeq \sig(t_1 \beta_1 b_1)$.
\end{lemma}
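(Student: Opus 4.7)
The plan is to combine Lemma~\ref{lemma:ambiguities} (which produces the ambiguity and the data $t_3, b_3$ from a coincidence of leading monomials) with Lemma~\ref{lemma:g-pol-non-singular} (which guarantees that the G-polynomial's signature equals $\siga(a)$ up to $\simeq$), after a short divisibility check on the leading coefficients. First I would observe that, since $\lt(f) \neq 0$ is expressed as a sum of the two nonzero terms $\lt(t_i g_i b_i)$, these two terms must have the same underlying monomial $m$; writing $\lt(t_i g_i b_i) = c_i m$ gives $\lc(f) = c_1 + c_2$. In particular $\lm(t_1 g_1 b_1) = \lm(t_2 g_2 b_2) = m$, so together with the hypothesis $\sig(t_1 \beta_1 b_1) \succ \sig(t_2 \beta_2 b_2)$ we are exactly in the setting of Lemma~\ref{lemma:ambiguities}. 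Applying it produces a regular ambiguity $a \in \amb(g_1^{[\beta_1]}, g_2^{[\beta_2]})$ together with a term $t \in T(A)$ and $b_3 \in \<Y>$ such that $t \lma(a) b_3 = m$ and $t \siga(a) b_3 \simeq \sig(t_1 \beta_1 b_1)$.

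Next I would set $g_3^{[\beta_3]} := \GPol(a)$. By Lemma~\ref{lemma:g-pol-non-singular} and the remarks immediately following its statement, $\lm(g_3) = \lma(a)$, $\lc(g_3) = \gcd(\lc(g_1), \lc(g_2))$ and $\sig(\beta_3) \simeq \siga(a)$. Since $t_i = \tilde{c}_i \mu_i$ implies $c_i = \tilde{c}_i \lc(g_i)$, each $\lc(g_i)$ divides $c_i$, so $\lc(g_3)$ divides $c_1 + c_2 = \lc(f)$. Setting $\lambda := \lc(f)/\lc(g_3) \in R$ and $t_3 := \lambda \cdot t$ (i.e.\ rescaling only the coefficient of $t$), we obtain $\lt(t_3 g_3 b_3) = \lambda \lc(g_3) \cdot (t \lma(a) b_3) = \lc(f) \cdot m = \lt(f)$.

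For the signature condition, I would use that $t_3$ and $t$ agree on their monomial parts and that $\sig(\beta_3) \simeq \siga(a)$: then $\sig(t_3 \beta_3 b_3) = t_3 \sig(\beta_3) b_3$ has the same monomial part as $t \siga(a) b_3$, and the latter is $\simeq \sig(t_1 \beta_1 b_1)$ by the conclusion of Lemma~\ref{lemma:ambiguities}. Hence $\sig(t_3 \beta_3 b_3) \simeq \sig(t_1 \beta_1 b_1)$, completing the argument. The only real subtlety — and the step I would most carefully check — is the bookkeeping of coefficients versus monomials in the terms $t_i$ and $t_3$: specifically, that the rescaling by $\lambda$ needed to match the coefficient $\lc(f)$ is legal (which requires the divisibility $\lc(g_3) \mid \lc(f)$ established above) and does not disturb the relation $\simeq$ on signatures (which is automatic since $\simeq$ identifies terms with identical monomial part).
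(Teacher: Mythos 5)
Your proof is correct and takes essentially the same route as the paper, which proves this lemma as a terse consequence of Lemma~\ref{lemma:ambiguities}, Lemma~\ref{lemma:g-pol-non-singular}, and the leading-term properties of G-polynomials. You have usefully filled in the divisibility check $\lc(g_3) = \gcd(\lc(g_1),\lc(g_2)) \mid c_1 + c_2 = \lc(f)$, which is the one small verification the paper leaves implicit under ``the properties of G-polynomials.''
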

\begin{proof}
Note that $\lm(t_1 g_1 b_1) = \lm(t_2 g_2 b_2)$.
Thus, the result follows from Lemma~\ref{lemma:ambiguities},~\ref{lemma:g-pol-non-singular}, and the properties of G-polynomials.
\end{proof}

\begin{definition}\label{def:complete}
A set $G\Sig \subseteq I\Sig$ is \emph{complete} if the G-poly-nomials of all ambiguities of $G\Sig$ are $\sig$-reducible by $G\Sig$. 
\end{definition}

A set can be completed by adding G-polynomials to it.
The following definition extends the idea of G-polynomials to syzygies.
To this end, we define the least common multiple of two module monomials as follows.
Let $\sigma_i = u_i a_i \varepsilon_j b_i \in M(\Sigma)$, $i = 1,2$.
If $a_{k'}$ is a suffix of $a_k$ and $b_{l'}$ is a prefix of $b_l$, where $\{k,k'\} = \{l,l'\} = \{1,2\}$, 
then $\lcm(\sigma_1, \sigma_2) \coloneqq \lcm(u_1,u_2) a_k \varepsilon_{j} b_l$.

\begin{definition}
Let $\gamma_1, \gamma_2 \in \Syz(I\Sig)$ be such that $\sig(\gamma_i) = c_i \sigma_i$ with $c_i \in R$, $\sigma_i \in M(\Sigma)$. 
Assume that $\lcm(\sigma_1,\sigma_2)$ is defined and let $m_i \in M(A)$, $b_i \in \<Y>$ be such that $\lcm(\sigma_1,\sigma_2) = m_i \sigma_i b_i$.
Also, let $c,d$ be Bézout coefficients of $\gcd(c_1, c_2)$.
The \emph{sig-Combination} of $\gamma_1$ and $\gamma_2$ is
	$\sigComb(\gamma_1,\gamma_2) \coloneqq c m_1 \gamma_1 b_1 + d m_2 \gamma_2 b_2$.
	
A set $H \subseteq \Syz(I\Sig)$ is \emph{sig-complete} if any sig-Combination of elements in $H$ is reducible by $H$.
\end{definition}

To end this section, we introduce a concept needed later.

\begin{definition}
Let $f^{[\alpha]} \in I\Sig$ and $G\Sig \subseteq I\Sig$.
We say that $f^{[\alpha]}$ is \emph{super reducible} by $G\Sig$ if there exist $g^{[\beta]} \in G\Sig$ and $t \in T(A)$, $b \in \<Y>$ such that
$\sig(\alpha) = \sig(t \beta b)$ and $\lm(f) = \lm(t g b)$.
\end{definition}

Note that super reducibility need not imply $\sig$-reducibility as the former only requires equality of the leading monomials, without considering the leading coefficients.
However, if a set of reducers is complete, a super reducible element is also $\sig$-reducible.
This the result of Proposition~\ref{prop:complete}, which is an adaptation of~\cite[Prop.~2.19]{francis-verron}.

\begin{proposition}\label{prop:complete}
Let $f^{[\alpha]} \in I\Sig$ and $G\Sig \subseteq I\Sig$ be complete and a signature Gr\"obner basis up to signature $\sig(\alpha)$.
If $f^{[\alpha]}$ is super reducible by $G\Sig$, then it is also $\s$-reducible by $G\Sig$.
\end{proposition}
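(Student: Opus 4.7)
The plan is to use the super reducer $g^{[\beta]}$ as a $\sig$-reducer directly when the leading coefficients already match, and otherwise to assemble a $\sig$-reducer from two pieces: the super reducer itself, and a second reducer supplied by the signature Gröbner basis property at a strictly smaller signature. These two pieces are then combined through Lemma~\ref{lemma:g-pol}, after which completeness is invoked to land the final reducer inside $G\Sig$.

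I would first fix a super reducer $g^{[\beta]} \in G\Sig$ with multipliers $t \in T(A)$, $b \in \<Y>$ satisfying $\sig(t\beta b) = \sig(\alpha)$ and $\lm(tgb) = \lm(f)$, and set $c_f = \lc(f)$, $c = \lc(tgb)$. If $c = c_f$, then $\lt(tgb) = \lt(f)$ and $tg^{[\beta]}b$ is already a singular $\sig$-reducer of $f^{[\alpha]}$. From now on I assume $c \neq c_f$. Consider the labeled element $(f - tgb)^{[\alpha - t\beta b]} \in I\Sig$: its polynomial part is nonzero with leading term $(c_f - c)\lm(f)$, and since signatures are terms (coefficients included), the equality $\sig(\alpha) = \sig(t\beta b)$ forces exact cancellation of the top terms of the two labelings, giving $\sig(\alpha - t\beta b) \prec \sig(\alpha)$. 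The signature Gröbner basis hypothesis then produces $g_2^{[\beta_2]} \in G\Sig$ and multipliers $t_2, b_2$ with $\lt(t_2 g_2 b_2) = (c_f - c)\lm(f)$ and $\sig(t_2 \beta_2 b_2) \prec \sig(\alpha)$.

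At this point I have the decomposition $\lt(f) = \lt(tgb) + \lt(t_2 g_2 b_2)$ together with $\sig(t\beta b) = \sig(\alpha) \succ \sig(t_2 \beta_2 b_2)$, which is exactly the hypothesis of Lemma~\ref{lemma:g-pol}. Applying it produces a G-polynomial $g_3^{[\beta_3]} = \GPol(a)$ of some $a \in \amb(g^{[\beta]}, g_2^{[\beta_2]})$ and multipliers $t_3, b_3$ with $\lt(t_3 g_3 b_3) = \lt(f)$ and $\sig(t_3 \beta_3 b_3) \simeq \sig(\alpha)$. Since $G\Sig$ is complete, $g_3^{[\beta_3]}$ is $\sig$-reducible by some $h^{[\eta]} \in G\Sig$ via multipliers $t', b'$. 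Left-multiplying by $t_3$ and right-multiplying by $b_3$, the pair $(t_3 t', b' b_3)$ satisfies $\lt((t_3 t')h(b' b_3)) = \lt(f)$ and $\sig((t_3 t')\eta(b' b_3)) \preceq \sig(t_3 \beta_3 b_3) \simeq \sig(\alpha)$, producing the desired $\sig$-reduction of $f^{[\alpha]}$.

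The only delicate point, in my view, is the signature-cancellation step $\sig(\alpha - t\beta b) \prec \sig(\alpha)$: it is precisely the fact that signatures are terms rather than monomials that turns super-reducibility (equality of monomials and of signatures) into the coefficient cancellation needed to drop the signature strictly. Once that is in hand, the rest is a mechanical assembly of Lemmas~\ref{lemma:ambiguities}--\ref{lemma:g-pol} and Definition~\ref{def:complete}.
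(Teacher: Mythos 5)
Your proof is correct and follows essentially the same route as the paper's: first handle the case of matching leading terms, then form the difference $f^{[\alpha]} - t g^{[\beta]} b$, observe the strict signature drop, invoke the signature Gröbner basis property below $\sig(\alpha)$ to obtain a second reducer, assemble the two via Lemma~\ref{lemma:g-pol}, and finish by completeness. The only difference is that you spell out the final multiplication step (lifting the reducer of the G-polynomial to a reducer of $f^{[\alpha]}$) in slightly more detail than the paper does.
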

\begin{proof}
We essentially follow the proof of~\cite[Prop.~2.19]{francis-verron}.
Super reducibility implies the existence of $g_1^{[\beta_1]} \in G\Sig$ and $t_1 \in T(A)$, $b_1 \in \<Y>$ such that
$\sig(\alpha) = \sig(t_1 \beta_1 b_1)$ and $\lm(f) = \lm(t_1 g_1 b_1)$.
If, in fact, $\lt(f) = \lt(t_1 g_1 b_1)$, then $f^{[\alpha]}$ is $\sig$-reducible by $g_1^{[\beta_1]}$.
Otherwise, with $h^{[\gamma]} = f^{[\alpha]} - t_1 g_1^{[\beta_1]} b_1$, we have $\lm(h) = \lm(f)$ and $\sig(\gamma) \prec \sig(\alpha)$.
By assumption, $h^{[\gamma]}$ is $\sig$-reducible by $G\Sig$. 
Let $g_2^{[\beta_2]}$ be such a reducer with $t_2 \in T(A)$, $b_2 \in \<Y>$ such that $\sig(\gamma) \succeq \sig(t_2 \beta_2 b_2)$ and $\lt(h) = \lt(t_2 g_2 b_2)$.
Consequently, we have
$\lt(f) = \lt(t_1 g_1 b_1) + \lt(t_2 g_2 b_2) \; \text{ and } \; \sig(\alpha) \succ \sig(\gamma) \succeq \sig(t_2 \beta_2 b_2)$.
By Lemma~\ref{lemma:g-pol} there exists $g_3^{[\beta_3]} = \GPol(a)$ for some $a \in \amb(g_1^{[\beta_1]},g_2^{[\beta_2]})$ and $t_3 \in T(A), b_3 \in \<Y>$ such that
$\lt(t_3 g_3 b_3) = \lt(f) \; \text{ and } \; \sig(t_3 \beta_3 b_3) \simeq \sig(t_1 g_1 b_1) = \sig(\alpha)$.
Since $G\Sig$ is complete, $g_3^{[\beta_3]}$ is $\sig$-reducible by $G\Sig$, and any reducer of $g_3^{[\beta_3]}$ can be used to $\sig$-reduce $f^{[\alpha]}$.
\end{proof}

\section{Cover criterion}
\label{sec:cover}

In this section, we adapt the cover criterion~\cite{GVW,francis-verron} to our setting, yielding an effective characterisation of signature Gröbner bases.

\begin{definition}\label{def:cover}
Let $G\Sig \subseteq I\Sig$ and $H \subseteq \Syz(I\Sig)$.
Furthermore, let $g_1^{[\beta_1]}, g_2^{[\beta_2]} \in I\Sig$.
An ambiguity $a \in \amb(g_1^{[\beta_1]}, g_2^{[\beta_2]})$ is \emph{covered} by $(G\Sig,H)$ if there exist $g^{[\beta]} \in G\Sig$, 
$\gamma \in H$ and $t,t' \in T(A)$, $b,b'\in \<Y>$ such that the following conditions hold:
\begin{itemize}
	\item $\siga(a) = \sig(t \beta b) + \sig(t' \gamma b')$;
	\item $\lm(t g b) < \lma(a)$;
\end{itemize}
\end{definition}

\begin{remark}
In Definition~\ref{def:cover}, either of $t$ and $t'$ can also be $0$.
If $t = 0$, then $\lm(tgb) = 0$ and the second condition is trivially fulfilled.
\end{remark}

Our version of the cover criterion differs in one main point from the classical one~\cite[Thm.~2.4]{GVW} for (commutative) polynomials over coefficient
fields: We need to consider linear combinations to form the signature.
This requirement comes from the fact that we deal with coefficient rings and is also necessary in the commutative case, see~\cite[Def.~2.20]{francis-verron}.

We can prove a characterisation of noncommutative signature Gr\"obner bases and syzygy bases using the cover criterion.
The following theorem is an adaptation of~\cite[Thm.~3.1]{francis-verron}.

\begin{theorem}\label{thm:correct}
  Let $G\Sig \subseteq I\Sig$ and $H \subseteq \Syz(I\Sig)$ be such that the following conditions hold:
  \begin{enumerate}
    \item for all $g^{[\beta]} \in G\Sig : g \neq 0$;
    \item $G\Sig$ is complete and $H$ is sig-complete;
    \item all $\sigma \in T(A)$ are reducible by $H \cup \{\beta \mid g^{[\beta]} \in G\Sig \}$;
    \item all regular ambiguities of $G\Sig$ are covered by $(G\Sig,H)$;
  \end{enumerate} 
  Then $G\Sig$ is a signature Gr\"obner basis and $H$ is a syzygy basis of $I\Sig$.
\end{theorem}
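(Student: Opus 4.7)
The plan is to prove both assertions simultaneously by a minimal-bad-element argument, adapting~\cite[Thm.~3.1]{francis-verron} to the noncommutative mixed setting. Call $\alpha \in \Sigma$ \emph{bad} if $\overline{\alpha} \neq 0$ and $f^{[\alpha]}$ fails to be $\s$-reducible by $G\Sig$, or if $\overline{\alpha} = 0$ but $\alpha$ fails to be reducible by $H$. If no bad element exists, both claims hold. Otherwise, the well-ordering of $\preceq$ selects a bad $\alpha$ of minimal signature, and condition~(3) furnishes either some $\gamma \in H$ or some label $\beta$ with $g^{[\beta]} \in G\Sig$, together with multipliers $t \in T(A)$, $b \in \<Y>$, whose signature equals $\sig(\alpha)$. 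I would then split into cases by the type and geometry of this divisor.

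I would dispatch the easy situations first. If the divisor is $\gamma \in H$ and $\alpha$ is itself a syzygy, $\alpha$ is already reducible by $H$; if instead $\overline{\alpha} \neq 0$, then $\alpha' \coloneqq \alpha - t\gamma b$ has the same image at strictly smaller signature, and by minimality $f^{[\alpha']}$ admits a $\s$-reducer in $G\Sig$ whose signature, lying below $\sig(\alpha)$, also regularly $\s$-reduces $f^{[\alpha]}$. If the divisor is a label with $\sig(\alpha) = \sig(t\beta b)$, three subcases arise according to how $\lm(\overline{\alpha})$ compares to $\lm(tgb)$. When they are equal, $f^{[\alpha]}$ is super reducible by $g^{[\beta]}$; minimality implies that $G\Sig$ is a signature Gröbner basis up to $\sig(\alpha)$, so completeness (condition~(2)) together with Proposition~\ref{prop:complete} turns super reducibility into $\s$-reducibility. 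When $\lm(\overline{\alpha}) > \lm(tgb)$, the element $\alpha' \coloneqq \alpha - t\beta b$ again has leading polynomial $\lt(\overline{\alpha})$ at strictly smaller signature, so minimality again yields a $\s$-reducer usable for $f^{[\alpha]}$. Each of these cases contradicts the badness of $\alpha$.

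The remaining and main obstacle is the \emph{misaligned} situation where the divisor is a label but either $\overline{\alpha} = 0$ or $\lm(\overline{\alpha}) < \lm(tgb)$. Setting $\alpha' \coloneqq \alpha - t\beta b$, one has $\sig(\alpha') \prec \sig(\alpha)$ and $\overline{\alpha'}$ nonzero with $\lm(\overline{\alpha'}) = \lm(tgb)$ (since $g \neq 0$ by condition~(1) and $R$ is a domain). Minimality provides a $\s$-reducer $g_2^{[\beta_2]} \in G\Sig$ of $f'^{[\alpha']}$ via multipliers $t_2, b_2$ with $\lm(t_2 g_2 b_2) = \lm(tgb)$ and $\sig(t_2 \beta_2 b_2) \prec \sig(t\beta b)$. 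Lemma~\ref{lemma:ambiguities} then yields a regular ambiguity $a \in \amb(g^{[\beta]}, g_2^{[\beta_2]})$ and multipliers $t_3, b_3$ such that $t_3 \siga(a) b_3 \simeq \sig(\alpha)$, and condition~(4) covers $a$ by some $g_4^{[\beta_4]} \in G\Sig$ and $\gamma \in H$ with $\siga(a) = \sig(t_4 \beta_4 b_4) + \sig(t_5 \gamma b_5)$ and $\lm(t_4 g_4 b_4) < \lma(a)$. After multiplying the cover by $t_3$ on the left and $b_3$ on the right, this expresses $\sig(\alpha)$ up to $\simeq$ as a sum of a $G\Sig$-contribution, whose polynomial leading monomial is strictly below $\lm(tgb)$, and a syzygy-contribution from $H$.

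The hard part is turning this cover-equation into an actual single reduction: of $\alpha$ by $H$ in the syzygy case, or of $f^{[\alpha]}$ by $G\Sig$ otherwise. One iteratively absorbs the $G\Sig$-contribution into elements of strictly smaller signature via minimality, and then uses sig-completeness of $H$ (condition~(2)) to adjust the residual $R$-coefficient through a single sig-Combination of elements of $H$. The main technical difficulty is the coefficient bookkeeping over the PID $R$ --- tracking Bézout witnesses through the cover relation --- which has no analogue in the field setting and is precisely why the sig-Combinations of Section~\ref{sec:ambiguities} are needed.
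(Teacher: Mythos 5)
Your setup --- the minimal-bad-element argument, using condition~(3) to obtain a divisor of $\sig(\alpha)$, and dispatching the aligned cases via Proposition~\ref{prop:complete} and via subtraction and minimality --- is sound and matches the spirit of the paper's proof. The easy cases are handled correctly.

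The gap is in your ``misaligned'' case, which you correctly identify as the main obstacle. After applying Lemma~\ref{lemma:ambiguities} and the cover criterion, you obtain a decomposition of $\sig(\alpha)$ as a sum of a $G\Sig$-contribution with strictly smaller polynomial leading monomial and an $H$-contribution. You then say one should ``iteratively absorb the $G\Sig$-contribution into elements of strictly smaller signature via minimality, and then use sig-completeness to adjust the residual $R$-coefficient.'' This step is not an argument: after one application of the cover, the decomposition no longer has the form ``a single divisor of $\sig(\alpha)$'' (the form your case analysis is organized around), and it is not explained how or why iterating would terminate at a usable conclusion, nor what exactly sig-completeness does there. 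The paper resolves this not by iteration but by a second, nested minimization: it considers \emph{all} decompositions $\sig(\alpha) = \sig(t_1 \beta_1 b_1) + \sig(t_1' \gamma_1 b_1')$ with $g_1^{[\beta_1]} \in G\Sig$ and $\gamma_1 \in H$ (allowing $t_1 = 0$ or $t_1' = 0$), picks one minimizing $\lm(t_1 g_1 b_1)$, and proves in a single step that $t_1 g_1^{[\beta_1]}b_1$ cannot be regular $\s$-reducible: a regular $\s$-reduction would, via Lemma~\ref{lemma:ambiguities} and the cover criterion, yield a strictly smaller $\lm(t_1 g_1 b_1)$, contradicting the inner minimality. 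Sig-completeness of $H$ is used precisely at this point, to merge the two syzygy contributions ($t_1' \gamma_1 b_1'$ coming from the old decomposition and $t' \gamma b'$ coming from the cover) into a single element $\sigComb(\gamma_1, \gamma) \in H$, so that the new decomposition again has the required two-term form --- not, as you write, to ``adjust the residual $R$-coefficient''. Once the inner claim is established, the paper finishes by the same super-reducibility/subtraction case analysis you sketched, now applied to the minimal decomposition rather than to an arbitrary single divisor.

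In short: your plan is missing the key structural idea (minimize over \emph{combined} decompositions into a $G\Sig$-part and an $H$-part, rather than over single divisors), and the ``iterative absorption'' placeholder is exactly where the real argument lives; without the inner minimization, I do not see how to make it terminate or how sig-completeness enters cleanly.
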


\begin{proof}
The proof is an adaption of that of~\cite[Thm.~2.4]{GVW}.
Assume, for contradiction, that there exists $f^{[\alpha]} \in I\Sig$ such that either $f \neq 0$ and $f^{[\alpha]}$ is not $\s$-reducible by $G\Sig$ or 
$f = 0$ and $\alpha$ is not reducible by $H$.
Pick such $f^{[\alpha]}$ with minimal signature.
Note that this implies that $G\Sig$ is, by definition, a signature Gr\"obner basis up to signature $\sig(\alpha)$.

Let $g_1^{[\beta_1]} \in G\Sig$, $\gamma_1 \in H$ and $t_1,t_1' \in T(A)$, $b_1,b_1'\in \<Y>$ such that
\begin{equation}\label{eq:cover-1}
	\sig(\alpha) = \sig(t_1 \beta_1 b_1) + \sig(t_1' \gamma_1 b_1').
\end{equation}
By assumption, such a decomposition exists (in fact, with either $t_1 = 0$ or $t_1' = 0$ but we do not require this for~\eqref{eq:cover-1}).
We select these elements so that $\lm(t_1g_1b_1)$ is minimal and claim that $t_1g_1^{[\beta_1]}b_1$ is not regular $\s$-reducible by $G\Sig$.

To prove this, suppose that $t_1g_1^{[\beta_1]}b_1$ is regular $\s$-reducible by $g_2^{[\beta_2]}$, i.e., there exist $t_2 \in T(A), b_2 \in \<Y>$ such that 
$\lt(t_1 g_1 b_1) = \lt(t_2 g_2 b_2)$ and $\s(t_1 \beta_1 b_1) \succ \s(t_2 \beta_2 b_2)$.
Then Lemma~\ref{lemma:ambiguities} implies the existence of a regular ambiguity $a \in \amb(g_1^{[\beta_1]}, g_2^{[\beta_2]})$ and $t_3 \in T(A), b_3 \in \<Y>$ such that
\begin{equation}\label{eq:cover-2}
	t_3 \lma(a) b_3 = \lm(t_i g_i b_i) \; \&  \; t_3 \siga(a) b_3 = \sig(t_1 \beta_1 b_1).
\end{equation}
By assumption $a$ is covered by $(G\Sig,H)$, i.e., there exist
$g^{[\beta]} \in G\Sig$, $\gamma \in H$ and $t,t' \in T(A)$, $b,b'\in \<Y>$ such that
$\lm(t g b) < \lma(a)$ and $\siga(a) = \sig(t \beta b) + \sig(t' \gamma b')$.
With~\eqref{eq:cover-1} and~\eqref{eq:cover-2}, this yields
$$\sig(\alpha) = \sig(t_3 t \beta b b_3) + \sig(t_3 t' \gamma b' b_3) + \sig(t_1' \gamma_1 b_1')$$
and $\lm(t_3 t g b b_3) < \lm(t_1 g_1 b_1)$.
Let $\delta = \sigComb(\gamma_1, \gamma) \in H$, which is well-defined.
Since, by definition, $\sig(\delta)$ divides the sum $\sig(t_3 t' \gamma b' b_3) + \sig(t_1' \gamma_1 b_1')$,
the pair $(g^{[\beta]}, \delta)$ yields a decomposition of $\sig(\alpha)$ with smaller leading monomial than $\lm(t_1 g_1 b_1)$; a contradiction to the minimality of $\lm(t_1 g_1 b_1)$.

Thus, $t_1g_1^{[\beta_1]}b_1$ is not regular $\s$-reducible.
Now, we distinguish between two cases depending on whether $f = 0$ or not.

If $f \neq 0$, then $\lm(f) \neq \lm(t_1 g_1 b_1)$ because otherwise $f^{[\alpha]} -  t_1' 0^{[\gamma_1]} b_1'$ would 
be super reducible by $g_1^{[\beta_1]}$, and thus, by Proposition~\ref{prop:complete}, $\sig$-reducible by $G\Sig$.
But then also $f^{[\alpha]}$ would be $\sig$-reducible -- a contradiction.

Let $f_1^{[\alpha_1]} = f^{[\alpha]} - t_1 g_1^{[\beta_1]} b_1 - t_1' 0^{[\gamma_1]} b_1'$.
Then $\sig(\alpha_1) \prec \sig(\alpha)$ and $\lt(f_1) = \max(\lt(f), \lt(t_1g_1b_1)) \neq 0$.
By minimality of $\sig(\alpha)$,  $f_1^{[\alpha_1]}$ is $\s$-reducible by $G\Sig$.
But this implies that $f^{[\alpha]}$ or $t_1 g_1^{[\beta_1]} b_1$ is regular $\s$-reducible by $G\Sig$, which is a contradiction.

If $f = 0$ and $f_1^{[\alpha_1]}$ is like before, then $\lt(f_1) = \lt(t_1 g_1 b_1)$.
Now, if $f_1 \neq 0$, then $f_1^{[\alpha_1]}$ being $\s$-reducible implies that $t_1 g_1^{[\beta_1]} b_1$ is regular $\s$-reducible by $G\Sig$, which is a contradiction.
Thus $f_1 = t_1 g_1 b_1 = 0$, and by assumption on $G\Sig$, we can conclude $t_1 = 0$, showing that $\alpha$ is reducible by $\gamma_1 \in H$.
\end{proof}

\vspace{-0.3cm}
\section{Algorithm}
\label{sec:algorithm}

\subsection{Description}
\label{sec:description}

\vspace{-0.3cm}
\begin{algorithm}[h]
  \SetAlgoLined
  \KwIn{$(f_1,\dots,f_r) \in A^{r}$ generating a labeled module $I^{[\Sigma]}$}
  \KwOut{
    \par
    \begin{minipage}[t]{0.9\linewidth}
      \vspace{-0.25cm}
      \begin{itemize}[leftmargin=3em]
        \setlength{\itemindent}{-15pt}
        \item $G^{[\Sigma]}\subseteq I\Sig$ a signature Gr\"obner basis of $I^{[\Sigma]}$
        \item $H \subseteq \Syz(I^{[\Sigma]})$ a syzygy basis of $I^{[\Sigma]}$
      \end{itemize}
    \end{minipage}
  }
  $G^{[\Sigma]} \leftarrow \emptyset$ ;
  $H \leftarrow \emptyset$ ;
  $P \leftarrow \{(f_1^{[\varepsilon_1]},\mathtt{N}),\dots,(f_r^{[\varepsilon_r]},\mathtt{N})\}$ \;
  \While{$P \neq \emptyset$}{
   select and remove $(p^{[\pi]},\mathtt{type})$ from $P$\label{line choice p 1} \label{algoline:sig-select}\;
    \uIf{\texttt{type} is not $\mathsf{S}(a)$ or $a$ is not covered by $(G^{[\Sigma]},H)$} {
      $p'^{[\pi']} \leftarrow$ result of regular $\s$-reducing $p^{[\pi]}$ by $G^{[\Sigma]}$ \label{line reduction sigGB 1} \;
      \uIf{$p' = 0$}{
        $H \leftarrow H \cup \{\pi'\}$, and make $H$ sig-complete \;}
      \uElse{
        $G^{[\Sigma]} \leftarrow G^{[\Sigma]} \cup \{p'^{[\pi']}\}$\;
 
        \For{$g^{[\beta]} \in G^{[\Sigma]}$ and $a \in \amb(p'^{[\pi']}, g^{[\beta]})$\label{algoline:sig-begin-add}}
         {
           {
            add $(\SPol(a),\mathsf{S}(a))$ to $P$ if $a$ is regular\;}
              add $(\GPol(a)),\mathsf{G}(a))$ to $P$\;
           \label{algoline:sig-end-add}
           }
        }
      }
    }
  \Return $G^{[\Sigma]},H$
  \caption{Signature Gr\"obner basis}\label{algo:labeledGB}
\end{algorithm}
\vspace{-0.3cm}


Equipped with Theorem~\ref{thm:correct}, we can describe an algorithm for enumerating signature Gröbner bases and syzygy bases in $A$.
This algorithm combines key elements of Kandri-Rody and Kapur's algorithm~\cite{Kandri-Rody-Kapur} for Gröbner bases over $\ZZ$ with signatures~\cite{francis-verron} and the general algorithm for computing signature Gröbner bases in free algebras over fields~\cite{hofstadler-verron}.
The difference between the free algebra and the mixed algebra is not apparent in the algorithm, but the computation of ambiguities, S- and G-polynomials and the reductions are using the definitions from the mixed algebra.

The algorithm runs on a loop, processing polynomials from a queue.
At each step, it selects a polynomial, and if not redundant, reduces it and forms new polynomials from it, adding them to the queue.
The algorithm ensures that, for every ambiguity of $G\Sig$, an element is eventually added which covers it.

As in the classical case, S- and G-polynomials are added for different purposes: S-polynomials create new leading monomials, and G-polynomials create new leading coefficients for existing leading monomials.
The cover criterion concerns the existence of leading monomials in the basis, and can be used to skip over S-polynomials which correspond to an ambiguity that is already covered.
The situation is different for G-polynomials: even if the corresponding ambiguity is covered, it is necessary to process and add the G-polynomial to complete the set $G\Sig$.

This requires keeping track of the construction of each labeled polynomial in the algorithm.
For this purpose, the main queue $P$ contains pairs $(p^{[\pi]},\mathtt{type})$, where $\mathtt{type}$ is either
 the symbol $\mathsf{N}$ indicating that $p$ is one of the input polynomials, or the symbol $\mathsf{S}(a)$, resp.~$\mathsf{G}(a)$, indicating that $p^{[\pi]}$ is the S-polynomial, resp.~the G-polynomial, of the ambiguity $a$.


As is classically done when presenting signature-based algorithms, the definitions, properties and the algorithm are stated using labeled polynomials, carrying their entire module representation.
However, all those properties only require considering the signature of the module representation, and the true strength is that the algorithms need only maintain pairs $(f,\sig(\alpha))$ instead of labeled polynomials $f^{[\alpha]}$.
In that case, the output only contains the signatures of the computed polynomials and syzygies, but the full module representations can be recovered \emph{a posteriori}. We refer to~\cite{GVW,hofstadler-verron} for details on that reconstruction step.

\subsection{Spooling the list of pairs \& Correctness}

For brevity, the presentation of the algorithm is simplified concerning the handling of the queue of pairs.
Unlike in the commutative case or the free case over fields, in general the inserting from line~\ref{algoline:sig-begin-add} to~\ref{algoline:sig-end-add} involves infinitely many elements.
In order for the algorithm to correctly enumerate a signature Gröbner basis, it needs to process all possible S- and G-polynomials, so it cannot enter an infinite loop inside of the main loop (necessarily infinite).

Instead, the algorithm should be modified to add a spooling machinery ensuring that all pairs are processed.
We detail a possible implementation of such a mechanism.
The main idea is that the algorithm must ensure that $P$ is finite at all times.
The spooling mechanism ensures that by adding a finite number of pairs to $P$, then adding more whenever they have all been processed.
The additional mechanism would then run in parallel for all sources of pairs, ensuring that the pairs are processed in a fair order.

More precisely, we assume that we know a way to enumerate ambiguities, that is, we are given two functions:
\begin{itemize}
  \item $\mathtt{firstamb}$ taking as input two labeled polynomials $f^{[\alpha]}$ and $g^{[\beta]}$, and returning an ambiguity $a_0$ of them;
  \item $\mathtt{nextamb}$ taking as input an ambiguity $a$ of two labeled polynomials $f^{[\alpha]}$ and $g^{[\beta]}$, and returning another ambiguity $n(a)$ of them;
\end{itemize}
with the property that $\{a_0,n(a_0),n(n(a_0)),\dots\} = \amb(f^{[\alpha]},g^{[\beta]})$.

The algorithm would maintain an additional variable \texttt{spool}, which is a finite list of tuples formed of two labeled polynomials and an ambiguity between them.
The lines~\ref{algoline:sig-begin-add} to~\ref{algoline:sig-end-add} would be replaced by initialization in that list:

\smallskip

\bgroup
\removelatexerror
\begin{algorithm}[H]
  \renewcommand{\thealgocf}{1a}
  \caption{Add the ambiguities to the spooling queue}
  \renewcommand{\theAlgoLine}{12\alph{AlgoLine}}
  \renewcommand{\thealgocf}{\arabic{algocf}}
  \SetAlgoLined
          \For{$g^{[\beta]} \in G^{[\Sigma]}$}{
    $a_0 \leftarrow \mathtt{firstamb}(p'^{[\pi']}, g^{[\beta]})$\;
    add $a_0$ to $\mathtt{spool}$\;
  }
\end{algorithm}
\egroup

\smallskip

The construction of the pairs would be done closer to their actual use, by adding the following lines at the very end of the main loop, just prior to selection of the next pair:

\smallskip

\bgroup
\removelatexerror
\begin{algorithm}[H]
 \renewcommand{\thealgocf}{1b}
  \caption{Process ambiguities in the queue}
  \renewcommand{\thealgocf}{\arabic{algocf}}
  \renewcommand{\theAlgoLine}{16\alph{AlgoLine}}
  \SetAlgoLined
  \def\idxcmd{i}
  \For{$\idxcmd \in \{1,\dots,\#\mathtt{spool}\}$}{
    $a \leftarrow \mathtt{spool}[\idxcmd]$\;
    {
      add $(\SPol(a),\mathsf{S}(a))$ to $P$ if $a$ is regular\;}
      add $(\GPol(a),\mathsf{G}(a))$ to $P$ \;
    $\mathtt{spool}[\idxcmd] \leftarrow \mathtt{nextamb}(a)$\;
  }
\end{algorithm}
\egroup

\smallskip

This has the effect of adding finitely many new elements to $P$, namely at most $2$ for each element of $\mathtt{spool}$, and updating $\mathtt{spool}$ to generate new elements the next time the code is evaluated.

The main property that the machinery must satisfy is that it should correctly enumerate all ambiguities of pairs of polynomials.
This property, called a fair selection strategy, was originally introduced for the computation of subalgebra bases~\cite{Kapur-1989-CompletionProcedureComputing, Ollivier} and adopted for noncommutative bases in~\cite{Mora-1994-IntroductionToCommutative}.

\begin{definition}
  Algorithm~\ref{algo:labeledGB} is called \emph{fair} if, given any ambiguity of elements in $G^{[\Sigma]}$,
  the corresponding S- and G-polynomials are eventually processed in the main loop or discarded.
\end{definition}

\begin{proposition}
  With the structure described above, and if insertion and selection in $P$ are done first in, first out, the algorithm is fair.
\end{proposition}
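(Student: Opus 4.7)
The plan is to reduce fairness to two independent finiteness statements: (i) every element inserted into $P$ is eventually selected by the main loop, and (ii) for every pair of labeled polynomials in $G^{[\Sigma]}$, each of its ambiguities is eventually inserted into $P$. Together these give fairness, since any ambiguity whose S- or G-polynomial is selected from $P$ is then either processed or discarded by the main loop.

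For (i), I would first show that both $\mathtt{spool}$ and $P$ remain finite at all times. Since $G^{[\Sigma]}$ gains at most one element per iteration, $|G^{[\Sigma]}|$ is finite at every step; as $\mathtt{spool}$ carries one entry per pair formed with a newly added element of $G^{[\Sigma]}$, $|\mathtt{spool}|$ is finite as well. Consequently Algorithm~1b inserts only finitely many pairs into $P$ per iteration, and since $P$ starts with the $r$ input tuples and loses exactly one element per iteration while gaining finitely many, $|P|$ stays finite throughout. Under FIFO, an element $e$ inserted at iteration $N$ lies at a finite position from the front of the queue; since the front is removed and new insertions go to the back, $e$ advances exactly one position per subsequent iteration and is therefore selected in finitely many iterations.

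For (ii), let a pair $(g^{[\beta]}, g'^{[\beta']})$ of elements of $G^{[\Sigma]}$ be formed at the iteration $N$ when the later of the two is inserted. At that step Algorithm~1a adds $a_{0} = \mathtt{firstamb}(g^{[\beta]}, g'^{[\beta']})$ to $\mathtt{spool}$, and inspection of Algorithm~1b shows that this entry is never removed, only updated via $\mathtt{nextamb}$ exactly once per iteration. By the enumeration property $\{a_{0}, n(a_{0}), n(n(a_{0})), \dots\} = \amb(g^{[\beta]}, g'^{[\beta']})$, the $k$-th ambiguity in that sequence is the value of the entry at iteration $N+k$, so the corresponding S- and G-polynomials are inserted into $P$ at that iteration. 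Combining (i) and (ii), every ambiguity's associated S- and G-polynomials enter $P$ after finitely many iterations and are then selected after finitely many more, as required.

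The main delicacy I expect lies in claim (i): without the FIFO assumption, newly added elements could indefinitely overtake earlier ones and an element could linger in $P$ forever, even though $|P|$ stays finite at each step. The interplay between the bounded number of insertions per iteration (guaranteed by the finiteness of $\mathtt{spool}$, itself controlled by the finiteness of $G^{[\Sigma]}$ at every step) and the FIFO discipline is what pins down a deterministic finite waiting time for every element; beyond this, the argument is essentially bookkeeping.
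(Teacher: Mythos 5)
Your proposal is correct and takes essentially the same route as the paper: both rest on the two facts that the $\mathtt{nextamb}$ enumeration reaches any prescribed ambiguity after finitely many iterations, and that FIFO selection (combined with $P$ remaining finite at each step) forces every inserted pair to be selected after finitely many more iterations. You merely spell out the finiteness bookkeeping for $\mathtt{spool}$ and $P$ that the paper leaves implicit, and split the argument into the explicit steps (i) and (ii), whereas the paper bundles them into one short paragraph.
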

\begin{proof}
  Let $g_{i}^{[\beta_{i}]},g_{j}^{[\beta_{j}]} \in G^{[\Sigma]}$ and let $a$ be an ambiguity between them.
  Since the function $\mathtt{nextamb}$ enumerates all ambiguities of  $g_{i}^{[\beta_{i}]}$ and $g_{j}^{[\beta_{j}]}$, eventually the insertion mechanism reaches the ambiguity $a$, at which point the corresponding S- and G-polynomials are either inserted into $P$ or discarded.
  If the S-polynomial is discarded, there is nothing to prove.
  If it is inserted, let $N$ be the length of the list $P$ after insertion.
  Since selection in $P$ is done on a first in, first out basis, after $N$ runs through the loop, the S-polynomial will be processed.
  The same applies to the G-polynomial.
\end{proof}


\begin{theorem}
  If Algorithm~\ref{algo:labeledGB} is fair, then it correctly enumerates a signature Gröbner basis and a syzygy basis of $I^{[\Sigma]}$.
\end{theorem}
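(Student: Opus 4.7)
The plan is to apply Theorem~\ref{thm:correct} to the final output $G^{[\Sigma]}$ and $H$, viewed as the limits of those variables across iterations of the main loop. Each of the four conditions of Theorem~\ref{thm:correct} is monotonic in $(G^{[\Sigma]}, H)$ (once satisfied for a given witness, it remains so when more elements are added), so it suffices to argue that every required witness eventually appears after finitely many iterations.

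Condition~(1) is immediate by inspection: the algorithm only appends $p'^{[\pi']}$ to $G^{[\Sigma]}$ in the branch guarded by $p' \neq 0$. Condition~(3) is settled once the $r$ initial input pairs are processed: each $f_i^{[\varepsilon_i]}$ is regularly $\s$-reduced (trivially at first, since $G^{[\Sigma]} = \emptyset$) and the result, of signature $\varepsilon_i$, is added to either $G^{[\Sigma]}$ or $H$, so every $\sigma = c u a \varepsilon_i b \in T(\Sigma)$ becomes reducible. The sig-completeness half of condition~(2) is enforced explicitly whenever a new syzygy is added to $H$.

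The remaining two conditions---completeness of $G^{[\Sigma]}$ and coverage of the regular ambiguities of $G^{[\Sigma]}$---are where fairness plays the key role. Each time an element $p'^{[\pi']}$ is added to $G^{[\Sigma]}$, the scheduling loop registers ambiguities with every existing element of $G^{[\Sigma]}$; by fairness, every such ambiguity is eventually processed or discarded as already covered. When the G-polynomial of an ambiguity $a$ is processed, regular $\s$-reduction produces some $p'^{[\pi']}$ with $\sig(\pi') \simeq \siga(a)$ (since $\GPol(a)$ is non-singular by Lemma~\ref{lemma:g-pol-non-singular}); if $p' = 0$, then the G-polynomial $\s$-reduced to $0$, hence is $\s$-reducible, and otherwise $p'^{[\pi']}$ joins $G^{[\Sigma]}$ and becomes a $\s$-reducer of $\GPol(a)$, in either case yielding completeness. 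Similarly, when the S-polynomial of a regular ambiguity $a$ is processed and not discarded, the reduction result $p'^{[\pi']}$ satisfies $\sig(\pi') = \siga(a)$ and $\lm(p') < \lma(a)$ (strict because the first reduction step gives $\lt(h) < \lt(f)$). If $p' = 0$, then $\pi'$ is added to $H$ with signature $\siga(a)$ and covers $a$ via $t = 0$, $\gamma = \pi'$; if $p' \neq 0$, then $p'^{[\pi']}$ is added to $G^{[\Sigma]}$ and covers $a$ via $t' = 0$, $g^{[\beta]} = p'^{[\pi']}$. Ambiguities whose S-polynomial is discarded at selection are safe by the monotonicity of the cover relation.

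The main obstacle, and the main deviation from the commutative case, is the infinitely-branching scheduling: each pair in $G^{[\Sigma]}$ admits infinitely many (external) ambiguities, so the algorithm cannot enqueue them all at once. The spooling machinery described before Definition~4.5 interleaves this enumeration so that, for every pair that ever appears together in $G^{[\Sigma]}$, each of its ambiguities is eventually reached; the fairness hypothesis is precisely the statement that this interleaving, combined with the queue selection policy, processes every element of $P$ in finite time. Granted fairness, the arguments above apply uniformly at the limit, and Theorem~\ref{thm:correct} then yields that $G^{[\Sigma]}$ is a signature Gröbner basis and $H$ is a syzygy basis of $I^{[\Sigma]}$.
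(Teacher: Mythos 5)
Your proof is correct and follows essentially the same plan as the paper's: verify the four hypotheses of Theorem~\ref{thm:correct} for the limiting sets $G^{[\Sigma]}$ and $H$, noting that each condition is monotone, and use fairness to guarantee that every ambiguity is eventually processed so that its S-polynomial is covered and its G-polynomial is made $\s$-reducible. Your write-up is somewhat more detailed on the case analysis for completeness and coverage (distinguishing $p'=0$ versus $p'\neq 0$, and explaining where each element of the cover triple comes from), which fleshes out a step the paper states tersely as ``all G-polynomials and sig-Combinations are added''; the only nit is that the strictness $\lm(p')<\lma(a)$ comes from the S-polynomial construction itself, not from the first reduction step.
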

\begin{proof}
  We prove that the algorithm enforces the requirements of Theorem~\ref{thm:correct}.
  First, by construction all elements added to $G^{[\Sigma]}$ have a nonzero polynomial part.
  The bases $G\Sig$ and $H$ are complete and sig-complete respectively, because all G-polynomials  and all sig-Combinations are added to them.
  All signatures $\varepsilon_{i}$ are processed, and result in either an element in $H$ or in $G\Sig$, depending on whether $f_i^{[\varepsilon_{i}]}$ regular $\s$-reduces to $0$ or not.
  Finally, the algorithm ensures that all ambiguities it considers are covered: either the ambiguity $a$ is already covered, or an element is added to either $G^{[\Sigma]}$ or $H$ with signature $\siga(a)$.
  Either way, this element covers the ambiguity $a$.
  Finally, since the algorithm is fair, it processes all ambiguities, covering all of them eventually.
\end{proof}

\begin{remark}\label{remark:fair-module-ordering}
If the used module ordering is fair and the selection strategy processes elements by increasing signatures, then, whenever the algorithm considers a labeled polynomial $p^{[\pi]}$,
the sets $G^{[\Sigma]}$ and $H$ are a signature Gröbner basis and a syzygy basis respectively up to signature $\sig(\pi)$.
\end{remark}

\subsection{Elimination criteria}
\label{sec:elimination}

For simplicity, we presented the algorithm stripped to its main loop, with only the cover criterion necessary for the loop invariant.
In this section, we list a few additional criteria which can be added to the algorithm to skip redundant ambiguities and polynomials.
They all work similarly to their counterpart in the commutative case, ensuring that elements are covered (for S-polynomials) or $\sig$-reducible (for G-polynomials).
First, we focus on G-polynomials.

\begin{corollary}
  Let $g_{1}^{[\beta_{1}]},g_{2}^{[\beta_{2}]} \in G^{[\Sigma]}$ and $a \in \amb(g_1^{[\beta_{1}]},g_{2}^{[\beta_{2}]})$.
  If $\GPol(a)$ is $\sig$-reducible by $G^{[\Sigma]} $, then it can be discarded.
\end{corollary}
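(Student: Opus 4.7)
The plan is to show that this elimination criterion preserves the conditions of Theorem~\ref{thm:correct}, and hence the correctness of Algorithm~\ref{algo:labeledGB}. Discarding $\GPol(a)$ means we skip the main loop body for this pair, not adding anything new to $G^{[\Sigma]}$ or $H$, and creating no new ambiguities to be considered. So the only question is whether the invariants of Theorem~\ref{thm:correct} still hold after skipping.

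First I would recall that the sole purpose of processing a G-polynomial is to enforce the completeness condition (Definition~\ref{def:complete}), which is condition (2) of Theorem~\ref{thm:correct}: the G-polynomial of every ambiguity of $G^{[\Sigma]}$ must be $\sig$-reducible by $G^{[\Sigma]}$. By hypothesis $\GPol(a)$ is already $\sig$-reducible by $G^{[\Sigma]}$, so completeness at the ambiguity $a$ is directly witnessed and requires no additional element. The sig-completeness of $H$ and the other invariants do not involve G-polynomials, so they are unaffected. Condition (1) is vacuous since no element is added. Condition (3) follows because the reducer $g^{[\beta]} \in G^{[\Sigma]}$ of $\GPol(a)$ satisfies $\sig(t\beta b) \preceq \siga(a)$ with leading term matching $\lt(\GPol(a))$, so any signature that would have been provided by processing $\GPol(a)$ is already dominated by $\sig(\beta)$ (together with existing elements of $H$).

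Finally, for condition (4), if $a$ is regular the coverage of $a$ is the task of $\SPol(a)$, which is handled separately and is inserted into the queue in the very same step. If $a$ is not regular then condition (4) does not apply to $a$ at all. I expect the only delicate point to be condition (3), where one must observe that the hypothesis gives a reducer $g^{[\beta]}$ whose signature covers $\siga(a)$ (together, if necessary, with the syzygy module obtained by iterated $\sig$-reduction of $\GPol(a)$, all of whose signatures are themselves reducible by $\{\beta \mid g^{[\beta]} \in G^{[\Sigma]}\}$ by the same completeness argument). Once this is spelled out, the statement follows directly from Theorem~\ref{thm:correct}.
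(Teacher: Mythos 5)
Your argument is correct and follows exactly the approach of the paper's own proof, which is a one-line reference to Theorem~\ref{thm:correct} and Definition~\ref{def:complete}: the G-polynomial's only job is to witness completeness, and if it is already $\sig$-reducible that condition holds without adding it. Your discussion of condition (3) is slightly off-target (that condition concerns reducibility of all module terms and is secured by processing the $\varepsilon_i$, independently of any G-polynomial), but this does not affect the correctness of the overall argument.
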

\begin{proof}
 Follows from Theorem~\ref{thm:correct} and Definition~\ref{def:complete}.
\end{proof}

The criterion allows to avoid all reductions of G-polynomials to zero,
and, in fact, since we only consider top reductions, it avoids reductions of G-polynomials entirely. 
As a special case, we see that, if $\lc(g_1) \divides \lc(g_2)$, then all their G-polynomials can be discarded as they are all $\sig$-reducible by $g_1^{[\beta_{1}]}$.


We now move to S-polynomials.
Excluding covered ambiguities encapsulates a number of criteria, including the \emph{Syzygy criterion}~\citep[e.g.][Lem.~6.1]{eder:2017:survey}, stating that any regular ambiguity whose signature is reducible by the signature of a syzygy in $H$ can be discarded, and the \emph{Singular criterion}~\citep[e.g.][Lem.~6.2]{eder:2017:survey}, which says that for each signature only one regular ambiguity (the one with minimal leading monomial) has to be considered.
To make the cover criterion even stronger, we can exploit the fact that signatures of some syzygies, so-called \emph{trivial syzygies}, can be predicted in advance without having to perform any reductions.
In particular, for all $g_1^{[\beta_1]},g_2^{[\beta_2]} \in I\Sig$ and $m \in M(A)$, we obtain a trivial syzygy $g_2 m \beta_1 - \beta_2 m g_1$.
The aim of the F5 criterion is to detect these trivial syzygies.

\begin{corollary}[F5 criterion]\label{cor:F5-criterion}
Let $a$ be a regular ambiguity of elements in $G\Sig$ such that there exist $g_{1}^{[\beta_{1}]} ,g_{2}^{[\beta_{2}]} \in G\Sig$ and
$m \in M(A)$ with 
\begin{itemize}
	\item $\sig(g_2 m \beta_1) \not\simeq \sig(\beta_2 m g_1)$, and 
	\item $\siga(a)$ is reducible by $\max(\sig(g_2 m \beta_1), -\sig(\beta_2 m g_1))$.
\end{itemize}
Then $a$ is covered by the trivial syzygy $g_2 m \beta_1 - \beta_2 m g_1$ and $\SPol(a)$ can be discarded after adding this trivial syzygy to the set $H$.
\end{corollary}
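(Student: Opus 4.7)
The plan is to verify directly that $a$ satisfies the cover criterion of Definition~\ref{def:cover}, using the trivial syzygy $\gamma := g_2 m \beta_1 - \beta_2 m g_1$. First, I would confirm that $\gamma$ is indeed a syzygy: since $\overline{\cdot}$ is an $A$-bimodule homomorphism with $\overline{\beta_i} = g_i$, we get $\overline{\gamma} = g_2 m g_1 - g_2 m g_1 = 0$, so $\gamma \in \Syz(I\Sig)$ and it is legitimate to add it to $H$.

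Next, I would compute $\sig(\gamma)$. By compatibility of signatures with multiplication by terms, the two summands have signatures $\sig(g_2 m \beta_1) = \lt(g_2)\, m\, \sig(\beta_1)$ and $\sig(\beta_2 m g_1) = \sig(\beta_2)\, m\, \lt(g_1)$. The first hypothesis says that these two terms are not $\simeq$, i.e., they have distinct monomial parts. Consequently, no cancellation of leading terms occurs in the difference $g_2 m \beta_1 - \beta_2 m g_1$, and hence $\sig(\gamma)$ is the term with the larger monomial (with the appropriate sign), which is exactly $\max(\sig(g_2 m \beta_1), -\sig(\beta_2 m g_1))$.

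Finally, the second hypothesis supplies $t' \in T(A)$, $b' \in \<Y>$ with $\siga(a) = t'\, \sig(\gamma)\, b' = \sig(t' \gamma b')$. To invoke Definition~\ref{def:cover}, I use the remark following it and take $t = 0$, so that $\lm(t g b) = 0 < \lma(a)$ holds trivially and $\sig(t\beta b) = 0$; picking any $g^{[\beta]} \in G\Sig$ then yields the required decomposition $\siga(a) = \sig(t\beta b) + \sig(t' \gamma b')$. Therefore, once $\gamma$ is added to $H$, the ambiguity $a$ is covered by $(G\Sig, H)$, and by Theorem~\ref{thm:correct} we may discard $\SPol(a)$. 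The only delicate step is the middle paragraph's signature computation, whose correctness depends entirely on the $\not\simeq$ hypothesis preventing cancellation between the two leading terms; everything else is a direct application of Definition~\ref{def:cover}.
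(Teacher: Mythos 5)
The paper states this corollary without giving an explicit proof, so there is no argument in the paper to compare against; your write-up fills the gap in the expected way and is essentially correct. You verify that $\gamma = g_2 m \beta_1 - \beta_2 m g_1$ is a syzygy, identify $\sig(\gamma)$ as $\max(\sig(g_2 m \beta_1), -\sig(\beta_2 m g_1))$ using the $\not\simeq$ hypothesis to rule out cancellation of the candidate leading terms, and then satisfy Definition~\ref{def:cover} with $t = 0$ (invoking the remark that the leading-monomial condition is then vacuous). One small imprecision worth flagging: you justify $\sig(g_2 m \beta_1) = \lt(g_2)\, m\, \sig(\beta_1)$ by ``compatibility of signatures with multiplication by terms,'' but $g_2$ is a polynomial, not a term, so the stated compatibility $\sig(t\alpha t') = t\sig(\alpha)t'$ does not apply verbatim. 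The identity does hold, because if $m_i$ is a monomial of $g_2$ and $\tau$ a module monomial of $\beta_1$ with $m_i m \tau = \lm(g_2) m\, \sigm$ where $\sigm$ is the monomial of $\sig(\beta_1)$, then maximality and compatibility of the orderings force $m_i = \lm(g_2)$ and $\tau = \sigm$, so there is no collision at the top. A sentence to that effect would make the middle paragraph airtight. Everything else, including the final appeal to Theorem~\ref{thm:correct} to discard $\SPol(a)$ once $a$ is covered, is a direct and correct application of the definitions.
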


The F5 criterion as phrased above also includes Buchberger's coprime criterion for eliminating
S-polynomials coming from elements with coprime leading terms, see~\cite[Lem.~22]{LMAZ20} for a noncommutative version without signatures.
In particular, if $g_1^{[\beta_{1}]}, g_2^{[\beta_{2}]}$ are such that $\lc(g_1)$ and $\lc(g_2)$ as well as the commutative parts of $\lm(g_1)$ and $\lm(g_2)$ are coprime, then, for every regular external ambiguity of these elements, the S-polynomial can be discarded after adding a suitable trivial syzygy to $H$.

Unlike in the commutative case, it is not possible to simply add all trivial syzygies to $H$ whenever a new element is added to the signature Gröbner basis, as there are infinitely many.
However, in order to check the F5 criterion, only finitely many syzygies can apply, and we can construct them on demand.
This renders the complexity of applying Corollary~\ref{cor:F5-criterion} essentially quadratic in the size of $G\Sig$.
In Section~\ref{sec:homogenization}, we show that, analogous to the commutative case, this cost can be reduced to linear for homogeneous polynomials under certain module orderings.

It was observed in \cite{hofstadler-verron} that for some labeled modules $I^{[\Sigma]}$, there exist a finite set of labeled polynomials $G^{[\Sigma]}$, and a finite set of syzygies $H$, such that $G^{[\Sigma]}$ is a signature Gröbner basis of $I\Sig$, and $H \cup \{\text{trivial syzygies formed with elements of $G^{[\Sigma]}$}\}$ is a syzygy basis of $I^{[\Sigma]}$.
If that is the case, Algorithm~\ref{algo:labeledGB} will eventually have produced $G^{[\Sigma]}$ and $H$, and $P$ will only contain an infinite set of elements with signature all reducible by a trivial signature.

A particular case is that where $R$ is a field and there are no commutative variables, that is, when we are using Algorithm~\ref{algo:labeledGB} in the free algebra over a field.
In that setting, Buchberger's coprime criterion excludes all external ambiguities, which leads to two elements only having finitely many ambiguities,
and the algorithm may terminate.
The advantages of Algorithm~\ref{algo:labeledGB} over that in~\cite{hofstadler-verron} is the stronger cover criterion and the ability to use any module order (Section~\ref{sec:elimination-orders}).

\vspace{-0.1cm}
\section{Applications}
\label{sec:applications}

\subsection{Elimination orders}
\label{sec:elimination-orders}
\vspace{-0.1cm}

Write $X = X_1 \dot\cup X_{2}$ and $Y = Y_1 \dot\cup Y_{2}$.
Recall that a monomial ordering $\leq$ on $M(A)$ is an \emph{elimination ordering} for $X_1 \cup Y_1$ 
if $\lm(f) \in [X_2]\<Y_2>$ implies $f \in R[X_2]\<Y_2>$ for all $f \in A$.
If $G$ is a Gröbner basis of $I$ for such an ordering, then
 $G \cap R[X_2]\langle Y_2\rangle = \{g \in G \mid g \in R[X_2]\langle Y_2\rangle \}$
is a Gröbner basis of $I \cap R[X_2]\langle Y_2\rangle$, see~\cite{Borges_1998}.

Those orders are thus very useful when simplifying expressions (or in particular solving systems), and signatures allow to prove that the simplified expression is correct.
Unfortunately, in the case of the free algebra, any module ordering compatible with an elimination ordering cannot be fair.
As such, even if $X = \emptyset$ and $R$ is a field, the algorithm from \cite{hofstadler-verron} was not applicable.
On the other hand, provided that ambiguities are processed in a fair order, Algorithm~\ref{algo:labeledGB} correctly enumerates a signature Gröbner basis with respect to any order, including elimination orders.

\vspace{-0.1cm}
\subsection{Ideal arithmetic}
\label{sec:ideal-arithmetic}
\vspace{-0.1cm}

The next application we mention concerns the use of Gröbner bases to perform ideal arithmetic operations. 
We recall~\cite{Nordbeck_1998} that, given two ideals $I,J \subseteq R[X]\< Y >$, a Gröbner basis of their intersection is obtained by eliminating a new commutative indeterminate $t$ from $t I + (1-t)J \subseteq R[X,t]\< Y>$.
Similarly, computing the homogeneous part of an ideal $I$, that is, computing a Gröbner basis of the subideal of $I$ generated by all homogeneous polynomials in $I$, can be realised by 
a computation in $R[X,T,T^{-1}]\< Y >$, where $T,T^{-1}$ are sets of new commutative and invertible indeterminates. 
We refer to~\cite[Sec.~4]{finding-ideal-elements} for further details.

\vspace{-0.1cm}
\subsection{Homogenization}
\label{sec:homogenization}
\vspace{-0.1cm}

Recall that given a polynomial $f = \sum_{i=1}^{d} c_{i}w_{i} \in R[X]\langle Y\rangle$, its degree $\deg(f)$ is the maximum length of the monomials $w_i$ appearing in $f$, \ie, $\deg(f) = \max_i |w_i|$.
It is called homogeneous if all its terms have the same length.
The homogenized polynomial $f^{h} \in R[X,h]\langle Y \rangle$ is
$  f^{h} = \sum_{i=1}^{d} c_{i}h^{\deg(f) - |w_{i}|} w_{i}$,
it is a homogeneous polynomial, and evaluating it at $h=1$ yields back $f$.
In that construction, the homogenization variable $h$ commutes with all the variables.

If $R[X,h]\langle Y \rangle$ is ordered by a graded ordering and $h$ is treated as smaller than all other variables, evaluating a (signature) Gröbner basis of $(f_1^{h},\dots,f_{r}^{h})$ at $h=1$ yields a Gröbner basis of $(f_1,\dots,f_{r})$.

It is frequently preferrable to work with homogeneous polynomials when computing Gröbner bases, in both the commutative and noncommutative case.
Specifically in the context of signatures, they open the possibility to use more efficient orderings for the F5 criterion.
More precisely, the fact that one does not know in advance which signature realizes the maximum in the second condition of Corollary~\ref{cor:F5-criterion} makes it expensive to use the criterion.

This can be partially remedied by considering module orders which make this comparison easy, such as the \emph{position-over-term} (PoT) ordering, first comparing the index $\ind(\sigma) \coloneqq i$ of a signature $\sigma = a\varepsilon_{i}b$ before comparing the terms $a$ and $b$.
However, to fully utilize the F5 criterion in this case, elements have to be processed by increasing signatures, which does not constitute a fair selection strategy in the noncommutative setting if PoT is used.
An alternative can be to decouple the selection strategy from the module ordering, but then one cannot expect that all elements necessary to use the F5 criterion will be present in time for its use.

Another possibility, when dealing with homogeneous polynomials and a graded monomial ordering, is the \emph{degree-over-position-over-term} (DoPoT) order.
This order defines the degree of a signature $a\varepsilon_{i}b$ as $|a| + \deg(f_{i}) + |b|$.
In the homogeneous case, given a labeled polynomial $f^{[\alpha]}$, $\deg(f) = \deg(\sig(\alpha))$.
In the inhomogeneous case, it only holds that $\deg(f) \leq \deg(\sig(\alpha))$. 
The DoPoT ordering first compares the degree of the signatures, then the index $i$, and finally the terms $a$ and $b$.
This ordering is fair, and can thus be used as a selection strategy in the algorithm, by always picking the element with smallest signature in the queue $P$ in line~\ref{algoline:sig-select}.
Moreover, this ordering makes it possible to verify the conditions of the F5 criterion easily, and the incremental calculation ensures that all the required signatures are available when applying the criterion.


\begin{corollary}[F5 criterion optimised]
Assume that the generators $f_1,\dots,f_r$ of the labeled module $I\Sig$ are homogeneous and that DoPoT is used as a module ordering with a graded term ordering.

Let $g^{[\beta]} \in G\Sig$.
For all $m \in M(A)$ and $j > \ind(\beta)$, the module terms $\lt(g)m\varepsilon_{j}$ and $\varepsilon_{j}m\lt(g)$ are signatures of trivial syzygies.

If, additionally, elements in Algorithm~\ref{algo:labeledGB} are processed by increasing signature and all ambiguities with signature divisible by a signature as above are discarded, then all ambiguities whose signature is divisible by the signature of a trivial syzygy
$\sigma = g_2 m \beta_1 - \beta_2 m g_1$, with $\ind(\beta_1) \neq \ind(\beta_2)$,
are discarded in this way.
\end{corollary}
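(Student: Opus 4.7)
The plan is to treat the two claims of the corollary in turn, using throughout that in the homogeneous case with DoPoT, a labeled polynomial $f^{[\alpha]}$ satisfies $\deg(f) = \deg(\sig(\alpha))$, so that module terms arising as products have predictable DoPoT-degrees. For the first claim, fixing $g^{[\beta]} \in G\Sig$ with $i \coloneqq \ind(\beta)$ and given $m \in M(A)$ and $j > i$, I would exhibit the trivial syzygies $\tau_1 \coloneqq f_j m \beta - \varepsilon_j m g$ and $\tau_2 \coloneqq g m \varepsilon_j - \beta m f_j$ explicitly. A direct computation gives $\sig(f_j m \beta) = \lt(f_j)\, m\, \sig(\beta)$ (of index $i$) and $\sig(\varepsilon_j m g) = \varepsilon_j\, m\, \lt(g)$ (of index $j$); in the homogeneous case both module terms share the DoPoT-degree $\deg(f_j) + \deg(m) + \deg(g)$, so DoPoT compares them by their index, and $j > i$ forces $\sig(\tau_1) = \varepsilon_j\, m\, \lt(g)$. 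The symmetric computation for $\tau_2$ yields $\sig(\tau_2) = \lt(g)\, m\, \varepsilon_j$.

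For the second claim, I would fix a trivial syzygy $\sigma = g_2 m \beta_1 - \beta_2 m g_1$ with $i \coloneqq \ind(\beta_1) \neq j \coloneqq \ind(\beta_2)$ and, swapping the summands if needed, assume $i < j$. The same DoPoT-degree argument shows $\sig(\sigma) = \sig(\beta_2 m g_1) = \sig(\beta_2) \cdot m \cdot \lt(g_1)$. Decomposing $\sig(\beta_2) = c_2 u_2 a_2 \varepsilon_j b_2$, $\lt(g_1) = c_1 v_1 w_1$, and $m = v_m w_m$ into their $[X]$- and $\<Y>$-factors, and exploiting that $[X]$-variables glide past $\varepsilon_j$, one obtains
\[
\sig(\sigma) = c_1 c_2\, (u_2 v_m v_1)\, a_2\, \varepsilon_j\, (b_2 w_m w_1).
\]
Setting $m' \coloneqq (u_2 v_m)(b_2 w_m) \in M(A)$ and applying Part 1 to $g_1^{[\beta_1]}$ with the index $j > i$, the module term $\varepsilon_j\, m'\, \lt(g_1) = c_1\, (u_2 v_m v_1)\, \varepsilon_j\, (b_2 w_m w_1)$ arises as a signature in Part 1 and divides $\sig(\sigma)$: the $R$-, $[X]$- and right-$\<Y>$-parts match exactly, while the left-$\<Y>$-part $1$ is trivially a suffix of $a_2$. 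By transitivity, every ambiguity whose signature is a multiple of $\sig(\sigma)$ is then a multiple of $\varepsilon_j\, m'\, \lt(g_1)$, hence discarded by the Part 1 criterion.

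To close the argument, I still need $g_1^{[\beta_1]}$ to be in $G\Sig$ at the moment the algorithm reaches such an ambiguity. The processing-by-increasing-signature hypothesis handles this: under DoPoT, $\deg(\sig(\beta_1)) = \deg(g_1) \leq \deg(g_1) + \deg(g_2) + \deg(m) = \deg(\sig(\sigma))$, and in case of equality the index comparison $i < j$ still yields $\sig(\beta_1) \prec \sig(\sigma)$, so $\beta_1$ is processed strictly before $\sig(\sigma)$. The main technical obstacle is keeping track of the interleaving of commutative and noncommutative factors when multiplying $\sig(\beta_2)$, $m$ and $\lt(g_1)$ in $\Sigma$: one has to verify that the $[X]$-factors gather correctly to the leftmost position and that the $\<Y>$-letters land on the correct side of $\varepsilon_j$, so that the prefix/suffix conditions in module-monomial divisibility are genuinely met. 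Once that is in place, the corollary follows from the Part 1 construction and transitivity of divisibility.
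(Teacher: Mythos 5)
Your proposal is correct and follows essentially the same route as the paper's proof: both identify $\sig(\sigma)$ as the signature of the summand with larger index (using homogeneity and the graded ordering to compare degrees), show it is divisible by one of the Part~1 signatures coming from the element with smaller index, and then check that element has been processed in time via a degree-then-index comparison under DoPoT. You spell out the module-monomial divisibility more explicitly than the paper, which is helpful; one tiny slip is the remark that the ``$R$-parts match exactly'' -- in fact only $c_1 \mid c_1 c_2$ is guaranteed, but that is exactly what divisibility requires, so the argument is unaffected.
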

\begin{proof}
  They are the signature of the trivial syzygies $gm\varepsilon_{j} - \beta mf_{j}$ and $\varepsilon_{j}mg - f_{j}m\beta$ respectively, observing that both members of a trivial syzygy must have the same degree.

  For the second part, the signature of the trivial syzygy $\sigma$ is $\sig(g_2m\beta_{1})$ if $\ind(\beta_{1})>\ind(\beta_{2})$ and $-\sig(\beta_2 m g_1)$ otherwise.
  Both cases are handled similarly, so assume that we are in the first one.
  Then the signature of $\sigma$ is a discarded signature, obtained from $g_2^{[\beta_{2}]}$.
  It remains to prove $\sig(\beta_{2}) \prec \siga(a)$, to ensure that $g_{2}^{[\beta_{2}]}$ was computed in time for discarding $a$.
  Since $\siga(a)$ is divisible by $\sig(\sigma) \succ \sig(\beta_2 m g_1)$, we have $\deg(\siga(a)) \geq \deg(\sig(\beta_{2}))$. 
  With $\ind(\siga(a)) = \ind(\beta_{1}) > \ind(\beta_{2})$, this yields $\sig(\beta_{2}) \prec \siga(a)$.
\end{proof}

\section{Experimental results}

We have written a prototype implementation\footnote{\label{foot}Available at \url{https://clemenshofstadler.com/software/}} of Algorithm~\ref{algo:labeledGB} in \textsc{SageMath}
for the case when $R$ is a field including the criteria for S-polynomial elimination discussed in Section~\ref{sec:elimination} (G-polynomials are redundant over fields).
We use it to compare the mixed algebra setting to other (more naive) approaches for computing noncommutative (signature) Gr\"obner bases involving some commutative variables.
More precisely, we compare Algorithm~\ref{algo:labeledGB} to the following two approaches:
\begin{enumerate*}
	\item classical Gr\"obner basis computations in the free algebra where commutator relations are added explicitly to the generators of an ideal;
	\item signature Gr\"obner basis computations in the free algebra where commutator relations are added explicitly to the generators of an ideal but are given a trivial signature $0$ so that $\s$-reductions by these relations are always possible. 
\end{enumerate*}

For the classical Gr\"obner basis computations we use \textsc{Singular:Letterplace}~\cite{letterplace} and for the naive signature-based computations we use our \textsc{SageMath} package \texttt{SignatureGB}\textsuperscript{\ref{foot}}.
In Table~\ref{table:comparison}, we report on the number of polynomials reduced, the number of zero reductions and the size of the resulting (signature) Gr\"obner basis for the following benchmark examples.

\begin{itemize}
\item Example \texttt{ufn1h} is from~\cite{letterplace-old} and described there.
	It concerns a homogeneous ideal in $\QQ[h]\< a,b,c,d>$.
		
	\item Iwahori-Hecke algebras~\cite{hump90}, from~\cite[Ex.~31]{LMAZ20}:\\
	 $	\texttt{ih} = (x^2 + hx-qx - hq, y^2 + hy-qy -hq, z^2 + hz-qz - hq,$\\
		$zx-xz, yxy - xyx, zyz-yzy, h^2 - qq^{-1}) \subseteq \QQ[q,q^{-1},h]\< x,y,z>.
	 $
	 
	 \item Homogenization of the relations of the discrete Heisenberg group $\< x,y,z \mid z = xyx^{-1}y^{-1}, xz = zx, yz = zy>$:\\
	 $	\texttt{heis} = (h^3 z - xyx^{-1}y^{-1}, h^2 - zz^{-1}, h^2 - xx^{-1}, h^2 - x^{-1}x,
		h^2 - yy^{-1}, h^2 - y^{-1}y) \subseteq \QQ[z,z^{-1},h]\<x,x^{-1},y,y^{-1}>$.
	 
\end{itemize}

For each of these homogeneous ideals, we compute truncated (signature) Gr\"obner bases up to a fixed degree.
The used degree bounds are indicated by the number after the ``\texttt{--}'' in the names of the examples in Table~\ref{table:comparison}.
For all examples, a degree-lexicographic monomial ordering is used, in combination with DoPoT for the signature-based computations.

\begin{table*}
\centering
\begin{tabular}{c|SSSSSSSSS} 
 \toprule
\multirow{2}{*}{Example} &\multicolumn{3}{c}{classical Gr\"obner basis} & \multicolumn{3}{c}{naive signature basis} & \multicolumn{3}{c}{Algorithm~\ref{algo:labeledGB}} \\
& {reductions} & {red.~to 0} & {size} & {reductions} & {red.~to 0} & {size} & {reductions} & {red.~to 0} & {size}  \\
 \midrule
\texttt{ufn1h-7} & $\num{877}$ &$\num{737}$ &$\num{140}$ &$\num{1784}$ &$\num{401}$ &$\num{1343}$ &$\num{168}$ &$\num{73}$ &$\num{95}$ \\
\texttt{ufn1h-8} &$\num{1447}$ &$\num{1243}$ &$\num{204}$ &$\num{4594}$ &$\num{922}$ &$\num{3556}$ &$\num{270}$ &$\num{120}$ &$\num{150}$ \\
\texttt{ih-7} &$\num{759}$ &$\num{658}$ &$\num{101}$ &$\num{1690}$ &$\num{158}$ &$\num{1363}$ &$\num{31}$ &$\num{7}$ &$\num{24}$ \\
\texttt{ih-8} &$\num{1059}$ &$\num{937}$ &$\num{122}$ &$\num{3635}$ &$\num{289}$ &$\num{2857}$ &$\num{35}$ &$\num{8}$ &$\num{27}$ \\
\texttt{heis-8} &$\num{1167}$ &$\num{823}$ &$\num{344}$ &$\num{18418}$ &$\num{755}$ &$\num{17431}$ &$\num{22}$ &$\num{4}$ &$\num{18}$ \\
\texttt{heis-9} & $\num{4002}$ & $\num{2947}$ &$\num{1055}$ & $\num{63734}$ & $\num{2476}$ & $\num{60510}$ & $\num{48}$ &$\num{14}$ &$\num{34}$ \\
  \bottomrule
\end{tabular}
\caption{}
\label{table:comparison}
\vspace*{-22pt}
\end{table*}

As Table~\ref{table:comparison} shows, Algorithm~\ref{algo:labeledGB} has to perform a lot less reductions and yields a lot smaller outputs.
The main reason for this behaviour is that in the classical approaches the commutator relations become oriented reduction rules, which makes them less flexible.
This causes a lot of computations that are avoided in the mixed algebra.
Additionally, the naive signature-based computation suffers from the fact that the commutator relations are only ``visible'' on the polynomial level but not on the signature level.
Hence, the elimination criteria cannot be exploited fully because they miss this crucial information.
In contrast to this, in the mixed algebra setting, the information about the commutative variables is also directly propagated to the signatures.


\begin{acks}
We thank the reviewers for their careful reading and their remarks.
\end{acks}

\bibliographystyle{ACM-Reference-Format}



\end{document}